\author{Gustavo de Paula Ramos\thanks{gpramos@icmc.usp.br}}
\affil{
Instituto de Ciências Matemáticas e de Computação,
Universidade de São Paulo,
Avenida Trabalhador São-Carlense 400,
13566-590,
São Carlos - SP,
Brazil
}
\numberwithin{equation}{section}
\newtheorem{thm}{Theorem}[section]
\newtheorem{prop}[thm]{Proposition} 
\newtheorem{lem}[thm]{Lemma}
\theoremstyle{definition}
\newtheorem{question}[thm]{Question}
\newtheorem*{ex}{Example}
\theoremstyle{remark}
\title{Ground states of the Schrödinger equation coupled with fourth-order gravitation -- Part 1: the case $K_{a, b} \leq 0$}
\newcommand{\nat}{\mathbb{N}}
\newcommand{\real}{\mathbb{R}}
\newcommand{\iu}{\mathrm{i}}
\newcommand{\eps}{\varepsilon}
\newcommand{\A}{\mathcal{A}}
\newcommand{\D}{\mathcal{D}}
\newcommand{\E}{\mathcal{E}}
\newcommand{\K}{\mathcal{K}}
\newcommand{\V}{\mathcal{V}}
\newcommand{\dif}{\ \mathrm{d}}
\newcommand{\ddif}{\mathrm{d}}
\newcommand{\Sobolev}{\mathscr{H}}
\newcommand{\Lebesgue}{\mathscr{L}}
\newcommand{\Sphere}{\mathscr{S}}
\DeclarePairedDelimiter{\abs}{\lvert}{\rvert}
\DeclarePairedDelimiter{\norm}{\lVert}{\rVert}
\DeclarePairedDelimiter{\parens}{(}{)}
\DeclarePairedDelimiter{\set}{\{}{\}}
\DeclarePairedDelimiter{\cci}{\lbrack}{\rbrack}
\DeclarePairedDelimiter{\coi}{\lbrack}{\lbrack}
\DeclarePairedDelimiter{\oci}{\rbrack}{\rbrack}
\DeclarePairedDelimiter{\ooi}{\rbrack}{\lbrack}
\begin{document}
\maketitle

\begin{abstract}
We are interested in the existence and asymptotic behavior of ground states of the following normalized nonlocal semilinear problem:
\[
\begin{cases}
- \Delta u + (V - \omega) u + (K_{a, b} \ast u^2) u
=
0
&\text{in} ~ \mathbb{R}^3;
\\
\|u\|_{\mathscr{L}^2}^2 = \mu,
\end{cases}
\]
where
\[
K_{a, b} (x)
:=
\frac{1}{|x|} \left(
	\frac{4}{3} e^{- b |x|}
	-
	\frac{1}{3} e^{- a |x|}
	-
	1
\right);
\]
$0 \leq a, b \leq \infty$; $V$ denotes a singular potential that vanishes at infinity and the unknowns are
$\omega \in \mathbb{R}$, $u \colon \mathbb{R}^3 \to \mathbb{R}$. This problem is obtained by looking for standing waves of the Schrödinger equation coupled with the nonrelativistic gravitational potential prescribed by a family of fourth-order gravity theories. In this paper, (i) we obtain a complete picture of the existence/nonexistence of ground states of the associated autonomous problem for every possible geometry of $K_{a, b}$, (ii) we obtain conditions that ensure the existence of ground states of the nonautonomous problem when $K_{a, b} \leq 0$ and (iii) we prove that as
\[
(a, b)
\to
(A, B)
\in
\left\{(0, 0), (\infty, \infty), (0, \infty)\right\},
\]
ground states of this problem respectively converge to a ground state of (1) the Schrödinger equation, (2) the Choquard equation and (3) a rescaling of the Choquard equation.

\vspace{1em}
\noindent
\textbf{Keywords.} Yukawa potential, prescribed norm, prescribed mass, mass constraint, constrained energy functional
\end{abstract}

\tableofcontents

\section{Introduction}
This paper is concerned with ground states of the following nonautonomous normalized nonlocal semilinear equation:
\begin{equation}
\label{intro:eqn:nonautonomous}
\begin{cases}
- \Delta u + \parens{V - \omega} u
+
\parens{K_{a, b} \ast u^2} u
=
0
&\text{in} ~ \mathbb{R}^3;
\\
\|u\|_{\mathscr{L}^2}^2 = \mu,
\end{cases}
\end{equation}
where
\[
K_{a, b} (x)
:=
\frac{1}{|x|} \parens*{
	\frac{4}{3} e^{- b |x|}
	-
	\frac{1}{3} e^{- a |x|}
	-
	1
};
\]
$0 \leq a, b \leq \infty$; $V$ denotes a singular potential and we want to solve for
$\omega \in \mathbb{R}$, $u \colon \mathbb{R}^3 \to \real$. More precisely, we set $e^{- 0 |\cdot|} \equiv 1$ and
$e^{- \infty |\cdot|} \equiv 0$.

\sloppy
Problem \eqref{intro:eqn:nonautonomous} is obtained when looking for standing waves of the Schrödinger equation coupled with the nonrelativistic gravitational potential prescribed by a family of fourth-order modified gravity theories indexed by the parameters $a, b$ (see Appendix \ref{physics}). As suggested in \cite[Section 5]{perlickSelfforceElectrodynamicsImplications2015}, a motivation for considering such a family of theories is to obtain a manageable model for the Newtonian gravitational self-force over massive point particles (see \cite[Section V]{poissonGravitationalSelfforce2005}) without the need of renormalization procedures when
$0 \leq a, b < \infty$.

More precisely, we obtain a manageable term for the self-force when $\int \abs{\nabla K_{a, b}}^2 \dif x$ does not diverge. In fact, $K_{a, b}$ has significantly different geometries according to how $a, b$ compare with each other (see Table \ref{intro:table:qualitative} and Appendix \ref{study}). In particular, we highlight the following asymptotic behaviors of $K_{a, b} \parens{x}$ as $\abs{x} \to \infty$.
\begin{itemize}
\item
If $0 = a < b \leq \infty$, then
$K_{a, b} = K_{0, b}$ models an attractive potential that is stronger than the Newtonian potential at long distances because
\begin{equation}
\label{intro:eqn:asy-1}
\abs{x} K_{0, b} \parens{x}
\xrightarrow[\abs{x} \to \infty]{}
- \frac{4}{3}.
\end{equation}
\item
If $0 < a, b \leq \infty$, then $K_{a, b}$ is indistinguishable from the Newtonian potential at long distances in the sense that
\begin{equation}
\label{intro:eqn:asy-2}
\abs{x} K_{a, b} \parens{x} \xrightarrow[\abs{x} \to \infty]{} -1.
\end{equation}
\end{itemize}

\begin{table}[h]
\label{intro:table:qualitative}
\centering
\begin{tabular}{c c c c}
Case
&Sign
&Radial monotonicity
&$\int \abs{\nabla K_{a, b} \parens{x}}^2 \dif x$
\\
\hline
$a = b = 0$
&Identically zero
&-
&0
\\ \\
$0 \leq a \leq 2 b = \infty$
&Negative
&Strictly increasing
&$\infty$
\\ \\
$0 \leq a \leq 2 b < \infty$
&Negative
&Strictly increasing
&$< \infty$
\\ \\
$0 < 2 b < a \leq 4 b < \infty$
&Negative
&Not monotonous
&$< \infty$
\\ \\
$0 < 4 b < a < \infty$
&Sign-changing
&Not monotonous
&$< \infty$
\\ \\
$0 < 4 b < a = \infty$
&Sign-changing
&Not monotonous
&$\infty$
\\ \\
$0 = 4 b < a < \infty$
&Positive
&Strictly decreasing
&$< \infty$
\\ \\
$0 = 4 b < a = \infty$
&Positive
&Strictly decreasing
&$\infty$
\end{tabular}
\caption{Qualitative behavior of $K_{a, b}$.}
\end{table}

We obtain various normalized problems of physical interest by replacing $a, b$ with specific values.
\begin{itemize}
\item
When $a = b = 0$, \eqref{intro:eqn:nonautonomous} becomes the normalized stationary \emph{Schrödinger equation},
\begin{equation}
\label{intro:eqn:normalized_Schroedinger}
\begin{cases}
- \Delta u + \parens{V - \omega} u
=
0;
\\
\|u\|_{\mathscr{L}^2}^2 = \mu.
\end{cases}
\end{equation}
\item
When $a = b = \infty$, \eqref{intro:eqn:nonautonomous} becomes the normalized \emph{Choquard equation},
\begin{equation}
\label{intro:eqn:normalized_Choquard}
\begin{cases}
- \Delta u + \parens{V - \omega} u - \parens*{|\cdot|^{- 1} \ast u^2} u
=
0;
\\
\|u\|_{\mathscr{L}^2}^2 = \mu.
\end{cases}
\end{equation}
\item
When $\parens{a, b} = \parens{0, \infty}$, \eqref{intro:eqn:nonautonomous} becomes a rescaling of the previous problem,
\begin{equation}
\label{intro:eqn:normalized_Choquard-type}
\begin{cases}
- \Delta u + \parens{V - \omega} u
-
\frac{4}{3} \parens*{|\cdot|^{- 1} \ast u^2} u
=
0;
\\
\|u\|_{\mathscr{L}^2}^2 = \mu.
\end{cases}
\end{equation}
\item
When $0 \leq a < 2 b < \infty$, \eqref{intro:eqn:nonautonomous} becomes a normalized stationary equation with a \emph{Bopp--Podolsky-type self-attractive interaction} (see \cite{zhangNormalizedSolutionSchrodinger2025}).
\item
When $0 = b < a = \infty$, \eqref{intro:eqn:nonautonomous} becomes the normalized stationary equation obtained by reducing the \emph{Schrödinger--Maxwell system} (see \cite{benciEigenvalueProblemSchrodingerMaxwell1998}),
\begin{equation}
\label{intro:eqn:normalized_SM}
\begin{cases}
- \Delta u + \parens{V - \omega} u
+
\frac{1}{3} \parens*{|\cdot|^{- 1} \ast u^2} u
=
0;
\\
\|u\|_{\mathscr{L}^2}^2 = \mu.
\end{cases}
\end{equation}
\item
When $0 = b < a < \infty$, \eqref{intro:eqn:nonautonomous} becomes the normalized stationary equation obtained by reducing the \emph{Schrödinger--Bopp--Podolsky system} (see \cite{daveniaNonlinearSchrodingerEquation2019}),
\begin{equation}
\label{intro:eqn:normalized_SBP}
\begin{cases}
- \Delta u + \parens{V - \omega} u
+
\parens{K_{a, 0} \ast u^2} u
=
0;
\\
\|u\|_{\mathscr{L}^2}^2 = \mu,
\end{cases}
\end{equation}
\end{itemize}

The archetypical example of a nonlocal semilinear elliptic equation is the Choquard equation
\begin{equation}
\label{intro:eqn:Choquard}
- \Delta u + u
-
\parens{I_\alpha \ast \abs{u}^p} \abs{u}^{p - 2} u
=
0
~ \text{in} ~
\real^N,
\end{equation}
where
$0 < \alpha < N$,
$
\frac{N + \alpha}{N}
\leq p \leq
\frac{N + \alpha}{\parens{N - 2}^+}
$
and
$
I_\alpha \parens{x}
:=
\abs{x}^{- \parens{N - \alpha}}
$.
The literature concerned with problems related with \eqref{intro:eqn:Choquard} is too vast to list here, so we restrict ourselves to cite the seminal papers \cite{liebExistenceUniquenessMinimizing1977, lionsChoquardEquationRelated1980} and the relatively recent survey about related problems \cite{morozGuideChoquardEquation2017}.

On the other hand, more general normalized nonlocal semilinear elliptic problems of the form
\[
\begin{cases}
- \Delta u + \parens{V - \omega} u
+
\parens{W \ast \abs{u}^p} \abs{u}^{p - 2} u = 0
&\text{in} ~
\real^N;
\\
\norm{u}_{\Lebesgue^2}^2 = \mu
\end{cases}
\]
have received much less attention, especially when $W$ is sign-changing or not homogeneous. In the autonomous case $V \equiv 0$, the recent papers \cite{bhimaniNormalizedSolutionsNonlinear2024, caoVariationalProblemRepulsiveattractive2025} studied the case where $W$ is of the form $I_\alpha - I_\beta$ with
$\alpha \neq \beta$ in $\ooi{0, N}$. In the nonautonomous case
$V \not \equiv 0$, the problem where $W$ is a sum of Riesz potentials with a weakly attractive
$V \in C^1 \parens{\real^N, \oci{- \infty, 0}}$ was considered in \cite{longNormalizedSolutionsCritical2023}.\footnote{We say that $V$ is \emph{weakly attractive potential} when
$V \parens{x} \leq \limsup_{\abs{y} \to \infty} V \parens{y}$
for every $x \in \real^N$.}
Similar nonautonomous problems were also studied in \cite{songNormalizedSolutionsPlanar2025, shenConcentratingNormalizedSolutions2025}.

In a similar direction, there has been an increasing interest about normalized elliptic problems with a local nonlinearity of the form
\begin{equation}
\label{intro:eqn:normalized_local}
\begin{cases}
- \Delta u + \parens{V - \omega} u  = f \parens{\cdot, u}
&\text{in} ~ \real^N;
\\
\norm{u}_{\Lebesgue^2}^2 = \mu.
\end{cases}
\end{equation}
First, let us consider mass-subcritical problems. The influential paper \cite{ikomaStableStandingWaves2020} addressed the existence of ground states of \eqref{intro:eqn:normalized_local} with an autonomous nonlinearity $f \parens{x, u} = f \parens{u}$ and under the hypothesis that $V$ is a continuous weakly attractive negative potential that vanishes at infinity. In \cite{yangNormalizedSolutionsNonlinear2022}, the existence of ground states was later extended for the case of a nonautonomous nonlinearity $f \parens{x, u}$ with a continuous weakly attractive potential $V$. In \cite{alvesNormalizedSolutionsSchrodinger2022}, the authors studied the existence of ground states for the mass-subcritical power nonlinearity $f \parens{u} = \abs{u}^{p - 2} u$ under the hypothesis that $V \colon \real^N \to \coi{0, \infty}$ is continuous and bounded. Problems of the form \eqref{intro:eqn:normalized_local} in the mass-supercritical case are also receiving increasing attention. Inspired by the pioneering paper \cite{jeanjeanExistenceSolutionsPrescribed1997} (see also \cite{bartschNaturalConstraintApproach2017}), the common approach employed in this case is to look for minimizers under the additional constraint given by a Poho\v{z}aev identity. In this context, the paper \cite{dingNormalizedSolutionSchrodinger2022} furnished conditions for the existence of solutions to \eqref{intro:eqn:normalized_local} under certain hypotheses on $V$, including
$
\lim_{\abs{x} \to \infty} V \parens{x}
=
\sup_{x \in \real^N} V \parens{x}
=
0
$.

As exemplified by the cited articles, it is standard to suppose that $V$ is a weakly attractive potential in order to obtain ground states for nonautonomous normalized problems. Influenced by the classical approach in Lions' \cite[Section III]{lionsConcentrationcompactnessPrincipleCalculus1984}, this paper considers a weaker condition (the \emph{energy deficiency condition} \ref{intro:hyp:energy_deficiency}) that ensures the existence of ground states even for certain sign-changing potentials that vanish at infinity.

\subsection{Ground states of the associated autonomous problem}

The first goal of this paper is to study the conditions for existence/nonexistence of ground states of the following autonomous problem associated with \eqref{intro:eqn:nonautonomous}:
\begin{equation}
\label{intro:eqn:autonomous}
\begin{cases}
- \Delta u - \omega u + \parens{K_{a, b} \ast u^2} u
=
0
&\text{in} ~ \real^3;
\\
\|u\|_{\mathscr{L}^2}^2 = \mu.
\end{cases}
\end{equation}
In order to recall the considered notion of ground states, we have to introduce the variational formulation of \eqref{intro:eqn:autonomous}. Problem \eqref{intro:eqn:autonomous} is naturally associated with the \emph{energy functional}
$\E_{a,b}^0 \colon \Sobolev^1 \to \real$
defined as
\[
\E_{a,b}^0 \parens{u}
=
\frac{1}{2} \A \parens{u}
+
\frac{1}{4} \K_{a, b} \parens{u},
\]
where
\[
\A \parens{u} 
:=
\int \abs*{\nabla u \parens{x}}^2 \dif x
\quad \text{and} \quad
\K_{a, b} \parens{u}
:=
\int \int
	K_{a, b} \parens{x - y} u \parens{x}^2 u \parens{y}^2
\dif x \ddif y.
\]
Standard arguments involving the Sobolev embeddings and the Hardy--Littlewood--Sobolev inequality show that these functionals are well defined, so we omit the details. Sometimes, it will also be useful to consider the functional
$\D_c \colon \Sobolev^1 \to \coi{0, \infty}$
defined as
\[
\D_c \parens{u}
=
\int \int
	\frac{e^{- c \abs{x - y}}}{\abs{x - y}}
	u \parens{x}^2 u \parens{y}^2
\dif x \ddif y,
\]
so that
$\K_{a, b} = \frac{4}{3} \D_b - \frac{1}{3} \D_a - \D_0$. In this context, we understand a \emph{ground state} of \eqref{intro:eqn:autonomous} to be a solution to the following minimization problem:
\[
\begin{cases}
\E_{a, b}^0 \parens{u}
=
E_{a, b}^0 \parens{\mu}
:=
\inf_{v \in \Sphere \parens{\mu}}
	\E_{a, b}^0 \parens{v};
\\
u \in \Sphere \parens{\mu},
\end{cases}
\]
where
\[
\Sphere \parens{\mu}
:=
\set*{u \in \Sobolev^1: \norm{u}_{\Lebesgue^2}^2 = \mu}.
\]
In fact, our first result provides the complete picture about existence and nonexistence of ground states of \eqref{intro:eqn:autonomous}.

\begin{thm}
\label{intro:thm:autonomous}
\begin{enumerate}
\item
If $0 \leq a \leq \infty$, $b = 0$ and $\mu > 0$, then
$E_{a, 0}^0 \parens{\mu} = 0$ and \eqref{intro:eqn:autonomous} does not admit ground states.
\item
Suppose that $0 \leq a \leq \infty$, $0 < b \leq \infty$ and
$\mu > 0$.
\begin{enumerate}
\item
Problem \eqref{intro:eqn:autonomous} admits ground states;
\item
its associated ground state energy is negative:
$E_{a, b}^0 \parens{\mu} < 0$;
\item
if $u$ denotes a ground state of \eqref{intro:eqn:autonomous}, then its associated Lagrange multiplier is negative: $\omega < 0$.
\end{enumerate}
\end{enumerate}
\end{thm}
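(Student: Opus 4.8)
The plan is to treat the two parts separately, the core of Part~2 being a concentration-compactness argument whose only non-routine input is the strict negativity of the ground state energy, itself a consequence of the non-integrable Newtonian tail of $K_{a, b}$. \emph{Part~1 ($b = 0$).} Here $K_{a, 0} \parens{x} = \frac{1}{3 \abs{x}} \parens{1 - e^{- a \abs{x}}} \geq 0$ for every $0 \leq a \leq \infty$, so $\K_{a, 0} = \frac{1}{3} \parens{\D_0 - \D_a} \geq 0$ and $\E_{a, 0}^0 \geq \frac{1}{2} \A \geq 0$ on $\Sobolev^1$; in particular $E_{a, 0}^0 \parens{\mu} \geq 0$. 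To see the infimum equals $0$, I would fix any $u \in \Sphere \parens{\mu}$ and dilate by $u_t \parens{x} := t^{3 / 2} u \parens{t x} \in \Sphere \parens{\mu}$: a change of variables gives $\A \parens{u_t} = t^2 \A \parens{u}$ and $\D_c \parens{u_t} = t \, \D_{c / t} \parens{u}$, so $0 \leq \K_{a, 0} \parens{u_t} = \frac{t}{3} \parens{\D_0 \parens{u} - \D_{a / t} \parens{u}} \leq \frac{t}{3} \D_0 \parens{u} \to 0$ as $t \to 0^+$, whence $E_{a, 0}^0 \parens{\mu} \leq \lim_{t \to 0^+} \E_{a, 0}^0 \parens{u_t} = 0$. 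Non-attainment is then immediate: a minimizer $u$ would give $\frac{1}{2} \A \parens{u} + \frac{1}{4} \K_{a, 0} \parens{u} = 0$ with both terms nonnegative, forcing $\A \parens{u} = 0$, hence $u$ constant and (being in $\Sobolev^1$) $u \equiv 0$, contradicting $\norm{u}_{\Lebesgue^2}^2 = \mu > 0$.

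\emph{Part~2, finiteness of the infimum and item (b).} Now fix $0 < b \leq \infty$. Combining the Hardy--Littlewood--Sobolev inequality ($\D_0 \parens{u} \leq C \norm{u}_{\Lebesgue^{12 / 5}}^4$) with Gagliardo--Nirenberg ($\norm{u}_{\Lebesgue^{12 / 5}}^4 \leq C \mu^{3 / 2} \A \parens{u}^{1 / 2}$ on $\Sphere \parens{\mu}$) and noting $\D_a \leq \D_0$, $\D_b \geq 0$, I obtain $\K_{a, b} \parens{u} \geq - \frac{4}{3} \D_0 \parens{u} \geq - C \mu^{3 / 2} \A \parens{u}^{1 / 2}$, hence $\E_{a, b}^0 \parens{u} \geq \frac{1}{2} \A \parens{u} - C \mu^{3 / 2} \A \parens{u}^{1 / 2}$; this shows $E_{a, b}^0 \parens{\mu} > - \infty$ and that every minimizing sequence is bounded in $\Sobolev^1$ (the $\Lebesgue^2$-norm being fixed). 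For item (b), I dilate again: since $b > 0$, $\D_{b / t} \parens{u} \to 0$ as $t \to 0^+$, so $\K_{a, b} \parens{u_t} = t \parens{\frac{4}{3} \D_{b / t} \parens{u} - \frac{1}{3} \D_{a / t} \parens{u} - \D_0 \parens{u}} = - c \, \D_0 \parens{u} \, t + o \parens{t}$, where $c = 1$ if $a > 0$ and $c = \frac{4}{3}$ if $a = 0$; therefore $\E_{a, b}^0 \parens{u_t} = - \frac{c}{4} \D_0 \parens{u} \, t + O \parens{t^2} < 0$ for $t > 0$ small, so $E_{a, b}^0 \parens{\mu} < 0$. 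This is the decisive estimate: it uses precisely that the Newtonian part $- \abs{x}^{- 1}$ of $K_{a, b}$ is not integrable at infinity and survives the rescaling, whereas the Yukawa corrections $\D_{a / t}, \D_{b / t}$ do not.

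\emph{Part~2, item (a).} On a minimizing sequence $\parens{u_n} \subseteq \Sphere \parens{\mu}$ --- bounded in $\Sobolev^1$ by the above --- I would run the concentration-compactness principle of Lions. The only lemma really needed is strict subadditivity, $E_{a, b}^0 \parens{\mu} < E_{a, b}^0 \parens{\nu} + E_{a, b}^0 \parens{\mu - \nu}$ for $0 < \nu < \mu$, and it comes cheaply: for any $u \in \Sobolev^1$ one has $\K_{a, b} \parens{u} = 4 \E_{a, b}^0 \parens{u} - 2 \A \parens{u} \leq 4 \E_{a, b}^0 \parens{u}$, so using $\A \parens{\sqrt{\theta} u} = \theta \A \parens{u}$ and $\K_{a, b} \parens{\sqrt{\theta} u} = \theta^2 \K_{a, b} \parens{u}$ one computes, for $\theta > 1$, $\E_{a, b}^0 \parens{\sqrt{\theta} u} = \theta \E_{a, b}^0 \parens{u} + \theta \frac{\theta - 1}{4} \K_{a, b} \parens{u} \leq \theta^2 \E_{a, b}^0 \parens{u}$; taking $u$ with $\E_{a, b}^0 \parens{u} < 0$ arbitrarily close to $E_{a, b}^0 \parens{\mu} < 0$ gives $E_{a, b}^0 \parens{\theta \mu} \leq \theta^2 E_{a, b}^0 \parens{\mu} < \theta E_{a, b}^0 \parens{\mu}$, and strict subadditivity follows by an elementary manipulation together with the (routine, scaling-based) continuity of $\mu \mapsto E_{a, b}^0 \parens{\mu}$. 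Vanishing of $\parens{u_n}$ is then impossible, since it would force $u_n \to 0$ in $\Lebesgue^p$ for $2 < p < 6$, hence $\D_c \parens{u_n} \to 0$ for every $c$, hence $\K_{a, b} \parens{u_n} \to 0$ and $\liminf_n \E_{a, b}^0 \parens{u_n} \geq 0 > E_{a, b}^0 \parens{\mu}$; and dichotomy is ruled out by strict subadditivity once one checks that the nonlocal cross-terms between two pieces separating to mutual distance $\rho_n \to \infty$ are $O \parens{\mu^2 \rho_n^{- 1}} \to 0$ --- this uses only $\abs{K_{a, b} \parens{z}} \leq C \abs{z}^{- 1}$ for large $\abs{z}$ --- so the energy is asymptotically additive. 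Hence compactness holds: up to a subsequence and translations (harmless, the problem being autonomous), $u_n \parens{\cdot - y_n} \to u$ in $\Lebesgue^p$ for $2 \leq p < 6$ and weakly in $\Sobolev^1$ with $\norm{u}_{\Lebesgue^2}^2 = \mu$; then $u_n^2 \to u^2$ in $\Lebesgue^{6 / 5}$ gives $\K_{a, b} \parens{u_n} \to \K_{a, b} \parens{u}$ via Hardy--Littlewood--Sobolev, while $\A \parens{u} \leq \liminf_n \A \parens{u_n}$, so $\E_{a, b}^0 \parens{u} \leq \liminf_n \E_{a, b}^0 \parens{u_n} = E_{a, b}^0 \parens{\mu}$ and $u$ is a ground state. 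The bulk of the technical work lies in this step --- the bookkeeping of masses and cross-terms in the dichotomy alternative --- although mass-subcriticality (reflected in the sub-quadratic $\A \parens{u}^{1 / 2}$ above) keeps it smooth; note that a symmetrization shortcut is unavailable in general, since $K_{a, b}$ need be neither radially monotone nor sign-definite.

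\emph{Part~2, item (c).} Finally, if $u$ is a ground state, then since $\Sphere \parens{\mu}$ is a smooth constraint and $\E_{a, b}^0 \in C^1$, the Lagrange multiplier rule provides $\omega \in \real$ with $- \Delta u + \parens{K_{a, b} \ast u^2} u = \omega u$; pairing with $u$ gives $\A \parens{u} + \K_{a, b} \parens{u} = \omega \mu$, while $\frac{1}{2} \A \parens{u} + \frac{1}{4} \K_{a, b} \parens{u} = E_{a, b}^0 \parens{\mu}$ gives $\K_{a, b} \parens{u} = 4 E_{a, b}^0 \parens{\mu} - 2 \A \parens{u}$. Substituting, $\omega \mu = 4 E_{a, b}^0 \parens{\mu} - \A \parens{u} < 0$, because $E_{a, b}^0 \parens{\mu} < 0$ by item (b) and $\A \parens{u} \geq 0$; hence $\omega < 0$.
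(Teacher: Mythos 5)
Your argument is correct and follows essentially the same route as the paper: the dilation $u_t(x) = t^{3/2} u(t x)$ (a reparametrization of the paper's $u_\theta(x) = \theta^3 u(\theta^2 x)$) to get $E_{a,0}^0(\mu) = 0$ with non-attainment and $E_{a,b}^0(\mu) < 0$ for $b > 0$, coercivity via Hardy--Littlewood--Sobolev plus Gagliardo--Nirenberg, strict subadditivity from the quadratic homogeneity of $\K_{a,b}$ under $u \mapsto \sqrt{\theta}\, u$ combined with the negativity of the energy, concentration-compactness for existence, and the Nehari identity for $\omega < 0$. The only point of divergence is the dichotomy exclusion, where you invoke the classical Lions cutoff-and-cross-term estimate (with the $O(\mu^2 \rho_n^{-1})$ bound) while the paper passes directly to the weak limit and uses a Br\'ezis--Lieb splitting of $\A$ and $\K_{a,b}$; both implementations are standard and work here.
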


Let us sketch the main ideas involved in the proof. The nonexistence result follows from a usual scaling argument. As for the existence result, if $\parens{u_n}_{n \in \nat}$ denotes a minimizing sequence of $\E_{a, b}^0|_{\Sphere \parens{\mu}}$ and $u_n \rightharpoonup u_\infty$ in $\Sobolev^1$ as $n \to \infty$, then two bad scenarios may occur:
\begin{itemize}
\item
\emph{Vanishing:} $u_\infty \equiv 0$;
\item
\emph{Dichotomy:} $0 < \norm{u_{\infty}}_{\Lebesgue^2}^2 < \mu$.
\end{itemize}
On one hand, we can rule out vanishing with a standard argument involving concentration-compactness. On the other hand, the strategy to rule out dichotomy involves proving the
\emph{Strict Subadditivity Condition}
\begin{equation}
\label{intro:eqn:SSC}
E_{a, b}^0 \parens{\mu}
<
E_{a, b}^0 \parens{\rho}
+
E_{a, b}^0 \parens{\mu - \rho}
~ \text{for every} ~
\mu > 0 ~ \text{and} ~ \rho \in \ooi{0, \mu}.
\end{equation}
In this context, we use the asymptotic behaviors \eqref{intro:eqn:asy-1}, \eqref{intro:eqn:asy-2} together with rescalings to prove that, even if $K_{a, b}$ is sign-changing, the ground state energy function
$\mu \mapsto E_{a, b}^0 \parens{\mu}$
takes negative values and we use this fact to prove \eqref{intro:eqn:SSC}.

\subsection{Ground states of the nonautonomous problem}

We begin by stating the first hypotheses on the external potential $V$.
\begin{enumerate}
[label=$(\mathrm{H}_\arabic*)$]
\item \label{intro:hyp:integrability}
$V \in \Lebesgue^{\frac{3}{2}} + \Lebesgue^\infty$.
\item \label{intro:hyp:vanishes_at_infinity}
$V$ \emph{vanishes at infinity} in the sense that
\[
\mathrm{meas}
	\parens*{\set*{x \in \real^3 : \abs*{V \parens{x}} > c}}
<
\infty
\]
for every $c > 0$.
\end{enumerate}
On one hand, \ref{intro:hyp:integrability} is a classical condition used to ensure that
\[
\Sobolev^1 \ni u
\mapsto
\V \parens{u}
:=
\int V \parens{x} u \parens{x}^2 \dif x
\in
\real
\]
is a well-defined functional. On the other hand, it follows from \ref{intro:hyp:integrability}, \ref{intro:hyp:vanishes_at_infinity} that the functional $\V$ is weakly continuous (see Lemma \ref{prelim:lem:V_is_weakly_continuous}).

\begin{ex}
Conditions \ref{intro:hyp:integrability} and \ref{intro:hyp:vanishes_at_infinity} are satisfied for potentials of the form
\[
V \parens{x}
:=
\sum_{1 \leq k \leq K} \frac{q_k}{\abs{x - x_k}^{\alpha_k}}
\]
with $K \in \nat$; $0 < \alpha_1, \ldots, \alpha_k < 2$;
$x_1, \ldots, x_K \in \real^3$ and
$q_1, \ldots, q_K \in \real$.
\end{ex}

Problem \eqref{intro:eqn:nonautonomous} is naturally associated with the energy functional
$\E_{a, b}^V \colon \Sobolev^1 \to \real$
defined as
\[
\E_{a, b}^V \parens{u}
=
\frac{1}{2} \A \parens{u}
+
\frac{1}{2} \V \parens{u}
+
\frac{1}{4} \K_{a, b} \parens{u}.
\]
As in the case of the autonomous problem, we understand a \emph{ground state} of \eqref{intro:eqn:nonautonomous} to be a solution to the following minimization problem:
\[
\begin{cases}
\E_{a, b}^V \parens{u}
=
E_{a, b}^V \parens{\mu}
:=
\inf_{v \in \Sphere \parens{\mu}}
	\E_{a, b}^V \parens{v};
\\
u \in \Sphere \parens{\mu}.
\end{cases}
\]

Having introduced the variational formulation of \eqref{intro:eqn:nonautonomous}, we proceed to the statement of the last hypothesis on $V$.
\begin{enumerate}
[label=$(\mathrm{H}_\arabic*)$]
\setcounter{enumi}{2}
\item \label{intro:hyp:energy_deficiency}
$E_{a, b}^V \parens{\mu} < E_{a, b}^0 \parens{\mu}$.
\end{enumerate}
Notice that \ref{intro:hyp:integrability}--\ref{intro:hyp:energy_deficiency} are similar to the conditions in \cite[Section III.1]{lionsConcentrationcompactnessPrincipleCalculus1984}. In particular, the \emph{energy deficiency condition} \ref{intro:hyp:energy_deficiency} rules out nonnegative potentials, including the trivial potential $V \equiv 0$. The motivation for this hypothesis is that it ensures the applicability of the usual argument used to rule out vanishing and dichotomy of minimizing sequences in nonautonomous normalized problems (see \cite[Section 3.1]{dinhBlowbehaviorPrescribedMass2020} or \cite[Lemma 4.5]{yangNormalizedSolutionsNonlinear2022}, for instance). We can also consider \ref{intro:hyp:energy_deficiency} a natural generalization of the condition on the Schrödinger operator
$- \Delta + V$ used to ensure that the normalized Schrödinger equation \eqref{intro:eqn:normalized_Schroedinger} admits ground states. Indeed, it follows from Theorem \ref{intro:thm:autonomous} in the case $a = b = 0$ that $E_{0, 0}^0 \parens{\mu} = 0$ and \eqref{intro:eqn:normalized_Schroedinger} does not admit ground states if $V \equiv 0$. On the other hand, it is classical that this problem has ground states if \ref{intro:hyp:integrability}--\ref{intro:hyp:energy_deficiency} are satisfied (see \cite[Theorem 11.5]{liebAnalysis2001}).\footnote{In this context, it is instructive to echo \cite[Remark 1.4]{dinhBlowbehaviorPrescribedMass2020}. Suppose that $a = b = 0$, i.e., $K_{a, b} \equiv 0$. It is clear that \ref{intro:hyp:energy_deficiency} implies $V < 0$ in a set of positive measure. On the other hand, it is not sufficient to suppose that $V < 0$ a.e. to obtain \ref{intro:hyp:energy_deficiency}. Indeed, it follows from the Hardy inequality that if
$- \frac{1}{4 \abs{x}^2} \leq V \parens{x} \leq 0$ for a.e.
$x \in \real^3$, then \ref{intro:hyp:energy_deficiency} is not satisfied. For a sufficient quantitative condition on $V$ that implies \ref{intro:hyp:energy_deficiency}, we refer the reader to \cite[Theorem 11.4.19]{deoliveiraIntermediateSpectralTheory2009}.}

The following result shows that \ref{intro:hyp:energy_deficiency} is stable by small perturbations of the considered potentials and, at least when \eqref{intro:eqn:autonomous} admits ground states, \ref{intro:hyp:energy_deficiency} is weaker than the condition ``$0 \not \equiv V \leq 0$ a.e.''.

\begin{prop}
\label{intro:prop:energy_deficiency}
Suppose that $0 \leq a, b \leq \infty$ and $\mu > 0$.
\begin{enumerate}
\item
\emph{(Nontrivial nonpositive potentials)}
Suppose that \ref{intro:hyp:integrability} is satisfied, \eqref{intro:eqn:autonomous} admits ground states and
$0 \not \equiv V \leq 0$ almost everywhere. Then \ref{intro:hyp:energy_deficiency} is satisfied.
\item
\emph{(Perturbations of energy-deficient potentials)}
Suppose that $V_1$ is a function defined a.e. in $\real^3$ such that \ref{intro:hyp:integrability}--\ref{intro:hyp:energy_deficiency} are satisfied in the case
$V = V_1$. Then there exists $K = K \parens{V_1} > 0$ such that if
$V_2 \in \Lebesgue^{\frac{3}{2}}$,
$V_3 \in \Lebesgue^\infty$
are such that
\[
\mathrm{meas}
	\parens*{\set*{
		x \in \real^3 : \abs*{V_2 \parens{x} + V_3 \parens{x}} > c
	}}
<
\infty
\]
for every $c > 0$ and
\[
K \norm{V_2}_{\Lebesgue^{\frac{3}{2}}}
+
\frac{1}{2}
\norm{V_3}_{\Lebesgue^{\infty}}
\mu
<
E_{a, b}^0 \parens{\mu} - E_{a, b}^{V_1} \parens{\mu},
\]
then \ref{intro:hyp:integrability}--\ref{intro:hyp:energy_deficiency} are satisfied in the case
$V = V_1 + V_2 + V_3$.
If we suppose further that
$\E_{a, b}^{V_1}|_{\Sphere \parens{\mu}}$
admits a ground state $u \in \Sphere \parens{\mu}$, then we can take $K = \frac{1}{2} \norm{u}_{\Lebesgue^6}^2$.
\end{enumerate}
\end{prop}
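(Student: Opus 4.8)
The plan is to test $\E_{a, b}^V$ against a ground state of the autonomous problem. Let $u$ be a ground state of \eqref{intro:eqn:autonomous}; replacing $u$ by $\abs{u}$ (which leaves $\A$, $\V$, $\K_{a, b}$ and $\norm{\cdot}_{\Lebesgue^2}$ unchanged, since they depend on $u$ only through $u^2$), we may assume $u \geq 0$. By the Lagrange multiplier rule, $u$ is a nonnegative, nontrivial weak solution of $- \Delta u + \parens{K_{a, b} \ast u^2 - \omega} u = 0$ for some $\omega \in \real$. Since $u^2 \in \Lebesgue^1 \cap \Lebesgue^3$ and $K_{a, b}$ differs from a multiple of the Newtonian kernel by a bounded function, $K_{a, b} \ast u^2$ is bounded; hence standard elliptic regularity makes $u$ continuous and the strong maximum principle yields $u > 0$ throughout $\real^3$. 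As $V \leq 0$ a.e.\ and $V < 0$ on a set of positive measure, it follows that $\int V u^2 \dif x < 0$, so, using $\E_{a, b}^V \parens{u} = \E_{a, b}^0 \parens{u} + \tfrac12 \V \parens{u}$ and $\E_{a, b}^0 \parens{u} = E_{a, b}^0 \parens{\mu}$,
\[
E_{a, b}^V \parens{\mu}
\;\leq\;
\E_{a, b}^V \parens{u}
\;=\;
E_{a, b}^0 \parens{\mu} + \tfrac12 \int V u^2 \dif x
\;<\;
E_{a, b}^0 \parens{\mu},
\]
which is \ref{intro:hyp:energy_deficiency}. (The maximum-principle step could be replaced by the strong unique continuation property for $- \Delta + W$ with $W \in \Lebesgue^{3/2}_\loc$, since a ground state vanishing on a positive-measure set would then be trivial.)

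\textbf{Part (2): conditions \ref{intro:hyp:integrability}--\ref{intro:hyp:vanishes_at_infinity}.}
These are immediate for $V = V_1 + V_2 + V_3$: the space $\Lebesgue^{3/2} + \Lebesgue^\infty$ absorbs $V_2 \in \Lebesgue^{3/2}$ and $V_3 \in \Lebesgue^\infty$, while a finite sum of functions vanishing at infinity vanishes at infinity because $\set*{\abs{V_1 + V_2 + V_3} > c} \subseteq \set*{\abs{V_1} > c/2} \cup \set*{\abs{V_2 + V_3} > c/2}$ and both summands are of finite measure.

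\textbf{Part (2): a coercivity estimate.}
The key preliminary is that $\E_{a, b}^{V_1}|_{\Sphere \parens{\mu}}$ is coercive in $\A$. Indeed $0 \leq \D_c \leq \D_0$ for $c \geq 0$, so the Hardy--Littlewood--Sobolev inequality gives $\abs{\K_{a, b} \parens{w}} \leq C \norm{w}_{\Lebesgue^{12/5}}^4$ uniformly in $a, b$; together with the mass-subcritical Gagliardo--Nirenberg inequality $\norm{w}_{\Lebesgue^{12/5}}^4 \leq C \norm{w}_{\Lebesgue^2}^3 \A \parens{w}^{1/2}$ and the infinitesimal form-boundedness of $V_1$ (a consequence of \ref{intro:hyp:integrability}), one obtains $C_1 = C_1 \parens{V_1, \mu, a, b} > 0$ with $\E_{a, b}^{V_1} \parens{w} \geq \tfrac14 \A \parens{w} - C_1$ for all $w \in \Sphere \parens{\mu}$. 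In particular $E_{a, b}^{V_1} \parens{\mu} > - \infty$, and any minimizing sequence $\parens{w_n}_{n \in \nat}$ of $\E_{a, b}^{V_1}|_{\Sphere \parens{\mu}}$ is eventually bounded in $\Sobolev^1$; by the Sobolev embedding there is $C_0 = C_0 \parens{V_1} > 0$ with $\norm{w_n}_{\Lebesgue^6} \leq C_0$ for all large $n$. I set $K := \tfrac12 C_0^2$, which depends only on $V_1$ (and the fixed $\mu, a, b$).

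\textbf{Part (2): conclusion of \ref{intro:hyp:energy_deficiency}, and the obstacle.}
Let $\delta := E_{a, b}^0 \parens{\mu} - E_{a, b}^{V_1} \parens{\mu} > 0$ (positive by \ref{intro:hyp:energy_deficiency} for $V_1$) and assume $K \norm{V_2}_{\Lebesgue^{3/2}} + \tfrac12 \norm{V_3}_{\Lebesgue^\infty} \mu < \delta$. Put $r := \delta - K \norm{V_2}_{\Lebesgue^{3/2}} - \tfrac12 \norm{V_3}_{\Lebesgue^\infty} \mu > 0$ and pick $w$ from the above minimizing sequence with $\E_{a, b}^{V_1} \parens{w} < E_{a, b}^{V_1} \parens{\mu} + r/2$ and $\norm{w}_{\Lebesgue^6} \leq C_0$. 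By Hölder's inequality ($\int V_2 w^2 \leq \norm{V_2}_{\Lebesgue^{3/2}} \norm{w}_{\Lebesgue^6}^2$ and $\int V_3 w^2 \leq \norm{V_3}_{\Lebesgue^\infty} \mu$),
\[
E_{a, b}^{V} \parens{\mu}
\;\leq\;
\E_{a, b}^{V_1} \parens{w} + \tfrac12 \int \parens{V_2 + V_3} w^2 \dif x
\;<\;
E_{a, b}^{V_1} \parens{\mu} + \tfrac r2 + K \norm{V_2}_{\Lebesgue^{3/2}} + \tfrac12 \norm{V_3}_{\Lebesgue^\infty} \mu,
\]
and the right-hand side equals $E_{a, b}^{V_1} \parens{\mu} + \delta - r/2 < E_{a, b}^0 \parens{\mu}$, which is \ref{intro:hyp:energy_deficiency} for $V$; if moreover $\E_{a, b}^{V_1}|_{\Sphere \parens{\mu}}$ attains its infimum at some $u$, one repeats the computation with $w = u$ (so the $r/2$ can be dropped) and $K = \tfrac12 \norm{u}_{\Lebesgue^6}^2$. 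The one delicate point is precisely that $K$ may depend only on $V_1$: a single $\eps$-approximate minimizer will not serve, since $\eps$ cannot be fixed before $V_2, V_3$ are given while $\norm{w_\eps}_{\Lebesgue^6}$ may blow up as $\eps \to 0$ when no minimizer exists; the coercivity estimate resolves this by providing one uniform $\Lebesgue^6$-bound along an entire minimizing sequence, after which the accuracy $r/2$ is adapted to the perturbation. In Part (1), the only subtlety is the positivity of ground states of the autonomous problem, which rests on standard elliptic regularity and the strong maximum principle.
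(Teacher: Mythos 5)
Your proof is correct and follows essentially the same route as the paper: in part (1) you test $\E_{a, b}^V$ at an autonomous ground state, and in part (2) you extract a uniform $\Lebesgue^6$ bound along a minimizing sequence of $\E_{a, b}^{V_1}|_{\Sphere \parens{\mu}}$ via coercivity and then absorb the perturbation by H\"older. Your only substantive addition is the regularity/maximum-principle argument guaranteeing $\V \parens{u} < 0$ in part (1) --- a point the paper asserts without justification --- while the paper closes part (2) by passing to the limit along the whole minimizing sequence instead of selecting an approximate minimizer adapted to $V_2, V_3$; the two are interchangeable.
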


We make no claim whatsoever that the proposition has considered the optimal class of permitted perturbations. For instance, it is expected that small perturbations by Hardy potentials preserve energy deficiency. In this context, our next result gives sufficient conditions under which the nonautonomous problem \eqref{intro:eqn:nonautonomous} admits ground states when
$K_{a, b} \leq 0$.

\begin{thm}
\label{intro:thm:nonautonomous}
Suppose that $\mu > 0$ and either
\begin{itemize}
\item
$a = b = 0$ (in which case $K_{a, b} \equiv 0$) or
\item
$0 \leq a \leq 4 b$ and $0 < b \leq \infty$ (in which case $K_{a, b} < 0$).
\end{itemize}
Suppose further that \ref{intro:hyp:integrability}--\ref{intro:hyp:energy_deficiency} are satisfied.
\begin{enumerate}
\item
Problem \eqref{intro:eqn:nonautonomous} admits ground states and
\item
if $u$ denotes a ground state of \eqref{intro:eqn:nonautonomous}, then its associated Lagrange multiplier is negative: $\omega < 0$.
\end{enumerate}
\end{thm}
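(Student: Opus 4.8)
The plan is to produce the ground state by the direct method, following the concentration--compactness scheme of Lions for the locally compact case in the form adapted to nonautonomous normalized problems in \cite{dinhBlowbehaviorPrescribedMass2020, yangNormalizedSolutionsNonlinear2022}. First observe that $E_{a, b}^V \parens{\mu} < 0$: indeed $E_{a, b}^V \parens{\mu} < E_{a, b}^0 \parens{\mu} \leq 0$ by \ref{intro:hyp:energy_deficiency} and Theorem \ref{intro:thm:autonomous} (with $E_{a, b}^0 \parens{\mu} = 0$ when $a = b = 0$ and $E_{a, b}^0 \parens{\mu} < 0$ when $b > 0$). Given a minimizing sequence $\parens{u_n}_{n \in \nat} \subset \Sphere \parens{\mu}$ for $\E_{a, b}^V$, I would first check that it is bounded in $\Sobolev^1$: since $K_{a, b} \leq 0$ one has $\tfrac14 \K_{a, b} \parens{u} \geq - \tfrac13 \D_0 \parens{u}$, and combining the Hardy--Littlewood--Sobolev inequality, the Sobolev embedding $\Sobolev^1 \hookrightarrow \Lebesgue^6$, the constraint $\norm{u_n}_{\Lebesgue^2}^2 = \mu$ and \ref{intro:hyp:integrability} (splitting off a part of $V$ with small $\Lebesgue^{3 / 2}$-norm) yields an estimate $\E_{a, b}^V \parens{u_n} \geq c \norm{\nabla u_n}_{\Lebesgue^2}^2 - C_1 \norm{\nabla u_n}_{\Lebesgue^2} - C_2$ with $c > 0$, which forces $\sup_n \norm{\nabla u_n}_{\Lebesgue^2} < \infty$. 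Passing to a subsequence, $u_n \rightharpoonup u_\infty$ in $\Sobolev^1$.

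Two bad scenarios must be excluded. Vanishing ($u_\infty \equiv 0$) is immediate from \ref{intro:hyp:energy_deficiency}: if $u_\infty \equiv 0$, then $\V \parens{u_n} \to 0$ by the weak continuity of $\V$ (Lemma \ref{prelim:lem:V_is_weakly_continuous}), so $E_{a, b}^V \parens{\mu} = \lim_n \E_{a, b}^V \parens{u_n} = \lim_n \E_{a, b}^0 \parens{u_n} \geq E_{a, b}^0 \parens{\mu}$, contradicting \ref{intro:hyp:energy_deficiency}; hence $m := \norm{u_\infty}_{\Lebesgue^2}^2 > 0$. For dichotomy, put $v_n := u_n - u_\infty \rightharpoonup 0$; the Brezis--Lieb lemma for $\A$ and $\norm{\cdot}_{\Lebesgue^2}^2$, the weak continuity of $\V$, and a nonlocal Brezis--Lieb splitting for $\K_{a, b}$ (its cross terms vanish because $K_{a, b} \ast u_\infty^2$ is bounded and vanishes at infinity, hence behaves like an admissible potential) give $\E_{a, b}^V \parens{u_n} = \E_{a, b}^V \parens{u_\infty} + \E_{a, b}^0 \parens{v_n} + o \parens{1}$; together with $\E_{a, b}^V \parens{u_\infty} \geq E_{a, b}^V \parens{m}$, $\E_{a, b}^0 \parens{v_n} \geq E_{a, b}^0 \parens{\norm{v_n}_{\Lebesgue^2}^2}$ and the continuity of $\rho \mapsto E_{a, b}^0 \parens{\rho}$, this yields $E_{a, b}^V \parens{\mu} \geq E_{a, b}^V \parens{m} + E_{a, b}^0 \parens{\mu - m}$. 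Once I know that this cannot hold for $m \in \ooi{0, \mu}$ (next paragraph), $m = \mu$ follows, so $v_n \to 0$ in $\Lebesgue^2$, hence in $\Lebesgue^p$ for $2 \leq p < 6$; then $\V \parens{u_n} \to \V \parens{u_\infty}$, $\K_{a, b} \parens{u_n} \to \K_{a, b} \parens{u_\infty}$ (Hardy--Littlewood--Sobolev continuity along the $\Lebesgue^{12 / 5}$-convergent sequence) and $\A \parens{u_\infty} \leq \liminf_n \A \parens{u_n}$, whence $\E_{a, b}^V \parens{u_\infty} \leq E_{a, b}^V \parens{\mu}$ and $u_\infty \in \Sphere \parens{\mu}$ is a ground state (and the convergence is strong).

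The main obstacle is the strict subadditivity
\[
E_{a, b}^V \parens{\mu}
<
E_{a, b}^V \parens{m} + E_{a, b}^0 \parens{\mu - m}
\qquad \text{for every } m \in \ooi{0, \mu}.
\]
When $a = b = 0$ this follows from the classical fact that $E_{0, 0}^V \parens{\rho} = \tfrac{\rho}{2} \inf \sigma \parens{- \Delta + V}$ together with $\inf \sigma \parens{- \Delta + V} < 0$ (which is \ref{intro:hyp:energy_deficiency}), and then existence of the ground state is itself classical (\cite[Theorem 11.5]{liebAnalysis2001}). When $b > 0$, the idea is a gluing construction. Let $w \in \Sphere \parens{m}$ be a near-minimizer of $\E_{a, b}^V$, which may be taken compactly supported, and let $z \in \Sphere \parens{\mu - m}$ be an autonomous ground state of $\E_{a, b}^0$: it exists by Theorem \ref{intro:thm:autonomous}, and it decays exponentially since its associated Lagrange multiplier is negative. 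One tests with $w + z \parens{\cdot - \xi}$ for $\abs{\xi}$ large, rescaled back to $\Sphere \parens{\mu}$ at exponentially small cost: the kinetic cross term is then negligible, while by the long-range attraction \eqref{intro:eqn:asy-1}--\eqref{intro:eqn:asy-2} the interaction energy between $w$ and $z \parens{\cdot - \xi}$ coming from the $\K_{a, b}$-term is strictly negative. The delicate point --- where \ref{intro:hyp:vanishes_at_infinity} enters decisively --- is to choose the placement $\xi$ (and a suitable sequence $\abs{\xi} \to \infty$) so that the potential contribution $\tfrac12 \V \parens{z \parens{\cdot - \xi}}$ of the displaced piece is strictly dominated by the gain from this negative interaction; this is what upgrades the ``$\leq$'' that holds for free into the strict inequality.

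Finally, for the sign of the Lagrange multiplier: a ground state $u$ of \eqref{intro:eqn:nonautonomous} is a constrained critical point of $\E_{a, b}^V$ on $\Sphere \parens{\mu}$, so $- \Delta u + V u + \parens{K_{a, b} \ast u^2} u = \omega u$ holds weakly for some $\omega \in \real$. Testing against $u$ gives $\A \parens{u} + \V \parens{u} + \K_{a, b} \parens{u} = \omega \mu$, while $\E_{a, b}^V \parens{u} = E_{a, b}^V \parens{\mu}$ reads $\A \parens{u} + \V \parens{u} + \tfrac12 \K_{a, b} \parens{u} = 2 E_{a, b}^V \parens{\mu}$; subtracting, $\omega \mu = 2 E_{a, b}^V \parens{\mu} + \tfrac12 \K_{a, b} \parens{u}$. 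Since $E_{a, b}^V \parens{\mu} < 0$ and $\K_{a, b} \parens{u} \leq 0$ (because $K_{a, b} \leq 0$), we conclude $\omega < 0$.
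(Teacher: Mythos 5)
Your overall scheme (coercivity, weak limit, exclusion of vanishing via \ref{intro:hyp:energy_deficiency} and the weak continuity of $\V$, Br\'ezis--Lieb splitting of $\A$, $\V$ and $\K_{a,b}$, and the Nehari-identity argument for $\omega < 0$) coincides with the paper's. The gap is in the one step you flag as the main obstacle: the strict subadditivity $E_{a,b}^V(\mu) < E_{a,b}^V(m) + E_{a,b}^0(\mu - m)$. Your gluing construction asks that the negative interaction between $w$ and the displaced piece $z(\cdot - \xi)$, which by \eqref{intro:eqn:asy-1}--\eqref{intro:eqn:asy-2} is of order $-c\,m(\mu-m)/\abs{\xi}$, dominate the potential contribution $\tfrac12 \V\parens{z(\cdot-\xi)}$. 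But \ref{intro:hyp:integrability}--\ref{intro:hyp:energy_deficiency} permit sign-changing potentials that are \emph{positive} near infinity with arbitrarily slow decay: for instance $V(x) = -W(x) + \abs{x}^{-1/2}\chi_{\set{\abs{x}\geq 1}}$ with $W$ large and compactly supported satisfies \ref{intro:hyp:integrability}--\ref{intro:hyp:energy_deficiency}, yet $\tfrac12\V\parens{z(\cdot-\xi)} \sim \tfrac12(\mu-m)\abs{\xi}^{-1/2} \gg \abs{\xi}^{-1}$. Hypothesis \ref{intro:hyp:vanishes_at_infinity} gives no rate, so there is no admissible choice of $\xi$ for which the interaction gain wins, and the inequality you need does not follow from this construction in the stated generality. (The paper explicitly advertises that \ref{intro:hyp:energy_deficiency} is meant to cover exactly such sign-changing potentials, so this is not a corner case.)

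The repair is both simpler than the gluing and is what the paper actually does: the hypothesis $K_{a,b} \leq 0$ gives, for $t = \mu/\rho \geq 1$ and $u \in \Sphere(\rho)$,
\begin{equation*}
\E_{a,b}^V\parens*{\sqrt{t}\,u}
=
\frac{t}{2}\A(u) + \frac{t}{2}\V(u) + \frac{t^2}{4}\K_{a,b}(u)
\leq
t\,\E_{a,b}^V(u),
\end{equation*}
whence $E_{a,b}^V(\rho) \geq \tfrac{\rho}{\mu}E_{a,b}^V(\mu)$ and likewise $E_{a,b}^0(\mu-\rho) \geq \tfrac{\mu-\rho}{\mu}E_{a,b}^0(\mu)$. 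Feeding these into your own splitting $E_{a,b}^V(\mu) \geq E_{a,b}^V(m) + E_{a,b}^0(\mu-m)$ yields $E_{a,b}^V(\mu) \geq \tfrac{m}{\mu}E_{a,b}^V(\mu) + \tfrac{\mu-m}{\mu}E_{a,b}^0(\mu)$, hence $E_{a,b}^V(\mu) \geq E_{a,b}^0(\mu)$ whenever $0 < m < \mu$, contradicting \ref{intro:hyp:energy_deficiency} directly. No translation, decay estimate, or exponential localization is needed; the strictness comes entirely from the energy deficiency condition. With this substitution the rest of your argument, including the treatment of the case $a=b=0$ and the sign of the Lagrange multiplier, is correct.
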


At least when $K_{a, b}$ models nonrepulsive potentials (i.e., $K_{a, b} \leq 0$), the theorem provides a unifying view for the existence of ground states of the aforementioned physical problems obtained for specific values of $a, b$, providing a natural generalization of the well-known criterions for the existence of ground states of the Schrödinger equation (see \cite[Theorem 11.5]{liebAnalysis2001}) and the Choquard equation (see \cite[Theorem III.1]{lionsConcentrationcompactnessPrincipleCalculus1984}).

The theorem only covers the case $K_{a, b} \leq 0$ due to the challenge of disproving the dichotomy of minimizing sequences when $K_{a, b}$ is sign-changing, which we plan to treat in an upcoming paper. The employed method of proof does not clarify whether the considered conditions are, in a certain sense, sharp for the existence of ground states. As such, we invite the reader to reflect about the following question.
\begin{question}
What is the maximal region for the parameters $a, b$ such that \ref{intro:hyp:integrability}--\ref{intro:hyp:energy_deficiency} still ensure the existence of ground states? If these conditions do not ensure the existence of ground states for every
$a, b \in \cci{0, \infty}$, then are there extra conditions on $V$ under which this holds?
\end{question}

\subsection{Asymptotic behavior of ground states}

Our last results are concerned with physically-relevant limit situations of $\parens{a, b}$. The first asymptotic result shows that ground states of \eqref{intro:eqn:nonautonomous} tend to ground states of \eqref{intro:eqn:normalized_Schroedinger} as
$a, b \to 0$.

\begin{thm}
\label{intro:thm:asy-1}
Suppose that \ref{intro:hyp:integrability}, \ref{intro:hyp:vanishes_at_infinity} are satisfied, $\mu > 0$ and consider sequences
$
\set{a_n}_{n \in \nat}, \set{b_n}_{n \in \nat}
\subset
\coi{0, \infty}
$
such that $a_n, b_n \to 0$ as $n \to \infty$. Suppose further that given $n \in \nat$, $u_n$ denotes a ground state of \eqref{intro:eqn:nonautonomous} in the case
$\parens{a, b} = \parens{a_n, b_n}$.
If \ref{intro:hyp:energy_deficiency} is satisfied in the case
$a = b = 0$, then \eqref{intro:eqn:normalized_Schroedinger} has a ground state $u_\infty$ such that
$
\lim_{n \to \infty}
\norm{u_n - u_\infty}_{\Sobolev^1}
=
0
$.
\end{thm}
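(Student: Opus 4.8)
The plan is to follow the standard concentration–compactness strategy for normalized problems, but carefully controlling the dependence on the parameters $\parens{a_n, b_n}$ as they degenerate to $\parens{0, 0}$. First, I would establish the convergence of energy levels: namely, $\lim_{n \to \infty} E_{a_n, b_n}^V \parens{\mu} = E_{0, 0}^V \parens{\mu}$. The upper bound $\limsup_n E_{a_n, b_n}^V \parens{\mu} \leq E_{0, 0}^V \parens{\mu}$ follows by testing $\E_{a_n, b_n}^V$ against a fixed near-minimizer of $\E_{0, 0}^V|_{\Sphere \parens{\mu}}$ and observing that $\K_{a_n, b_n} \parens{v} = \frac{4}{3} \D_{b_n} \parens{v} - \frac{1}{3} \D_{a_n} \parens{v} - \D_0 \parens{v} \to \frac{4}{3} \D_0 \parens{v} - \frac{1}{3} \D_0 \parens{v} - \D_0 \parens{v} = 0$ by dominated convergence (using $0 \leq \frac{e^{-c|x|}}{|x|} \leq \frac{1}{|x|}$ and the fact that $\D_0 \parens{v} < \infty$ by Hardy–Littlewood–Sobolev). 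The matching lower bound is more delicate since the minimizers $u_n$ live at varying parameters; I expect to obtain it as a byproduct of the compactness argument below, or alternatively by a direct comparison exploiting $K_{a_n, b_n} \leq 0$ (so $\K_{a_n, b_n} \parens{u_n} \leq 0$ and hence $\E_{a_n, b_n}^V \parens{u_n} \geq \frac{1}{2}\A\parens{u_n} + \frac{1}{2}\V\parens{u_n}$, which is bounded below thanks to \ref{intro:hyp:integrability} via the usual interpolation/Sobolev estimate). In particular $\limsup_n E_{a_n, b_n}^V\parens{\mu} < E_{0,0}^0\parens{\mu} = 0$ by \ref{intro:hyp:energy_deficiency}, which will be the crucial strict inequality.

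Next I would show that $\set{u_n}_{n \in \nat}$ is bounded in $\Sobolev^1$: the $\A$-term is controlled because $\K_{a_n,b_n}\parens{u_n} \leq 0$, $\V\parens{u_n}$ is absorbed by a small fraction of $\A\parens{u_n}$ plus a constant (Sobolev + \ref{intro:hyp:integrability}), and the $\Lebesgue^2$-norm is fixed at $\mu$. Passing to a subsequence, $u_n \rightharpoonup u_\infty$ in $\Sobolev^1$. The heart of the argument is ruling out vanishing and dichotomy. For vanishing: if $u_\infty \equiv 0$, then $\V\parens{u_n} \to 0$ by weak continuity of $\V$ (Lemma \ref{prelim:lem:V_is_weakly_continuous}), and since $\K_{a_n,b_n}\parens{u_n} \leq 0$ we get $\E_{a_n,b_n}^V\parens{u_n} \geq \frac{1}{2}\A\parens{u_n} + \V\parens{u_n} \geq o\parens{1}$, contradicting $E_{a_n,b_n}^V\parens{\mu} \to E_{0,0}^V\parens{\mu} < 0$. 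For dichotomy, I would run the usual splitting: write $u_n = v_n + w_n$ with $v_n$ concentrated near the origin and $w_n$ escaping to infinity, with $\norm{v_n}_{\Lebesgue^2}^2 \to \rho \in \ooi{0,\mu}$. Because $V$ vanishes at infinity, $\V\parens{w_n} \to 0$, so the translated pieces $w_n$ see essentially the autonomous functional $\E_{a_n,b_n}^0$; one is led to an inequality of the form $E_{0,0}^V\parens{\mu} \geq E_{0,0}^V\parens{\rho} + E_{0,0}^0\parens{\mu - \rho} = E_{0,0}^V\parens{\rho}$ (the last equality by Theorem \ref{intro:thm:autonomous}(1)), which contradicts the strict subadditivity $E_{0,0}^V\parens{\mu} < E_{0,0}^V\parens{\rho}$ that follows from \ref{intro:hyp:energy_deficiency} together with the fact that $E^V_{0,0}\parens{\cdot}$ is strictly decreasing (or: one rescales a near-minimizer at level $\rho$ to mass $\mu$ and uses $\K_{0,0} \equiv 0$, $\V < 0$ somewhere). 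The subtlety is that the parameters change with $n$; to handle it I would keep track of the cross terms $\K_{a_n,b_n}\parens{v_n + w_n} = \K_{a_n,b_n}\parens{v_n} + \K_{a_n,b_n}\parens{w_n} + \text{(interaction)}$ and show the interaction term is $o\parens{1}$ using that $K_{a_n,b_n}\parens{x} \to 0$ uniformly on compact sets while the nonlocal convolution against the escaping mass decays.

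With vanishing and dichotomy excluded, standard concentration–compactness gives $\norm{u_n - u_\infty}_{\Lebesgue^2} \to 0$, hence $u_n \to u_\infty$ strongly in $\Lebesgue^p$ for $2 \leq p < 6$ by interpolation with the $\Sobolev^1$-bound. Then $\V\parens{u_n} \to \V\parens{u_\infty}$ and $\K_{a_n,b_n}\parens{u_n} \to \K_{0,0}\parens{u_\infty} = 0$ (combining strong $\Lebesgue^{12/5}$-convergence with the pointwise decay $K_{a_n,b_n} \to 0$, via a Hardy–Littlewood–Sobolev splitting of the double integral into a compact part and a tail). Lower semicontinuity of $\A$ then yields $\E_{0,0}^V\parens{u_\infty} \leq \liminf_n \E_{a_n,b_n}^V\parens{u_n} = E_{0,0}^V\parens{\mu}$, and since $u_\infty \in \Sphere\parens{\mu}$ we conclude $u_\infty$ is a ground state of \eqref{intro:eqn:normalized_Schroedinger} and that $\A\parens{u_n} \to \A\parens{u_\infty}$, which upgrades weak to strong $\Sobolev^1$-convergence. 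The main obstacle I anticipate is the dichotomy step: making the splitting argument robust to the moving parameters $\parens{a_n, b_n}$, i.e., showing that the interaction energy between the concentrated and escaping parts is negligible uniformly in $n$ despite $K_{a_n,b_n}$ varying — this is where the uniform bound $\abs{K_{a_n,b_n}} \leq \abs{\,\cdot\,}^{-1}$ and the locally uniform convergence $K_{a_n,b_n} \to 0$ must be combined with care.
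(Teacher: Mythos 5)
Your overall skeleton --- prove $E_{a_n,b_n}^V\parens{\mu} \to E_{0,0}^V\parens{\mu}$, deduce that $\parens{u_n}$ is a minimizing sequence for the limit functional, then run concentration--compactness --- is the same as the paper's. However, there is a genuine gap in how you obtain the lower energy bound and the compactness. You repeatedly invoke ``$K_{a_n,b_n} \leq 0$, so $\K_{a_n,b_n}\parens{u_n} \leq 0$''. This is not available here: the theorem only assumes $a_n, b_n \to 0$ in $\coi{0,\infty}$, and by Table 1 and Appendix \ref{study} the kernel is sign-changing whenever $4 b_n < a_n$ (e.g.\ $a_n = 1/n$, $b_n = 1/n^2$) and nonnegative when $b_n = 0 < a_n$. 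Moreover, even where $\K \leq 0$ does hold, the inequality you draw from it points the wrong way: dropping a nonpositive term gives $\E_{a_n,b_n}^V\parens{u_n} \leq \frac12\A\parens{u_n} + \frac12\V\parens{u_n}$, not $\geq$. So your proof of $\liminf_n E_{a_n,b_n}^V\parens{\mu} \geq E_{0,0}^V\parens{\mu}$ and your exclusion of vanishing both rest on an unavailable hypothesis, and the fallback of obtaining the lower bound ``as a byproduct of the compactness argument'' is circular, because the compactness argument needs $\lim_n E_{a_n,b_n}^V\parens{\mu} = E_{0,0}^V\parens{\mu} < E_{0,0}^0\parens{\mu}$ as input.

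The missing ingredient is the elementary bound $\frac{1-e^{-cr}}{r} \leq c$ (Lemma \ref{asy-2:lem:L^2_estimate}), which gives $\abs{K_{a_n,b_n}\parens{x}} \leq \frac{4 b_n + a_n}{3}$ for \emph{all} $x$, hence $\abs{\K_{a_n,b_n}\parens{u}} \leq \frac{4 b_n + a_n}{3}\mu^2 \to 0$ uniformly on $\Sphere\parens{\mu}$, with no sign information needed. This single estimate yields both energy inequalities at once (test with a ground state $v_0$ of \eqref{intro:eqn:normalized_Schroedinger} for the $\limsup$, and with $u_n$ itself for the $\liminf$), shows $\E_{0,0}^V\parens{u_n} = \E_{a_n,b_n}^V\parens{u_n} + o_n\parens{1}$ so that $\parens{u_n}$ is a minimizing sequence of the \emph{fixed} functional $\E_{0,0}^V|_{\Sphere\parens{\mu}}$, repairs the vanishing step (since then $\E_{a_n,b_n}^V\parens{u_n} = \frac12\A\parens{u_n} + o_n\parens{1} \geq o_n\parens{1}$, contradicting \ref{intro:hyp:energy_deficiency}), and completely removes the parameter dependence from the dichotomy step that you flag as the main obstacle: one simply applies the proof of Theorem \ref{intro:thm:nonautonomous} in the case $a = b = 0$ verbatim. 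Your proposed substitute --- the pointwise bound $\abs{K_{a_n,b_n}} \leq \frac{4}{3}\abs{\cdot}^{-1}$ combined with locally uniform convergence --- does not suffice near the diagonal, where the $\abs{\cdot}^{-1}$ bound alone gives only an $O\parens{1}$ Hardy--Littlewood--Sobolev contribution; it is precisely the $\Lebesgue^\infty$ bound $\frac{4 b_n + a_n}{3}$ (which in fact gives uniform convergence $K_{a_n,b_n} \to 0$ on all of $\real^3$, not merely on compact sets) that kills it.
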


Next, we show that ground states of \eqref{intro:eqn:nonautonomous} tend to ground states of \eqref{intro:eqn:normalized_Choquard} as $a, b \to \infty$.

\begin{thm}
\label{intro:thm:asy-2}
Suppose that \ref{intro:hyp:integrability}, \ref{intro:hyp:vanishes_at_infinity} are satisfied, $\mu > 0$ and consider sequences
$
\set{a_n}_{n \in \nat}, \set{b_n}_{n \in \nat}
\subset
\oci{0, \infty}
$
such that $a_n, b_n \to \infty$ as $n \to \infty$. Suppose further that given $n \in \nat$, $u_n$ denotes a ground state of \eqref{intro:eqn:nonautonomous} in the case
$\parens{a, b} = \parens{a_n, b_n}$. Then the following implications hold.
\begin{enumerate}
\item
If $V \equiv 0$, then \eqref{intro:eqn:normalized_Choquard} has a ground state $u_\infty$ and there exists
$\set{x_n}_{n \in \nat} \subset \real^3$
such that
$
\lim_{n \to \infty}
\norm*{u_n \parens{\cdot - x_n} - u_\infty}_{\Sobolev^1}
=
0
$.
\item
If \ref{intro:hyp:energy_deficiency} is satisfied in the case
$a = b = \infty$, then \eqref{intro:eqn:normalized_Choquard} has a ground state $u_\infty$ such that
$
\lim_{n \to \infty}
\norm{u_n - u_\infty}_{\Sobolev^1}
=
0
$.
\end{enumerate}
\end{thm}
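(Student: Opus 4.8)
The plan is to show that, as $n \to \infty$, the given ground states $u_n$ become a minimizing sequence for the energy functional of the limiting Choquard problem, and then to invoke the compactness of such minimizing sequences that already underlies Theorems~\ref{intro:thm:autonomous} and~\ref{intro:thm:nonautonomous}. The preliminary estimate behind both implications is that $\D_{c_n} \parens{v_n} \to 0$ whenever $\set{v_n}_{n \in \nat}$ is bounded in $\Sobolev^1$ and $c_n \to \infty$. To see this, split the kernel $\frac{e^{- c_n \abs{z}}}{\abs{z}}$ at the sphere $\abs{z} = R$: by Young's inequality and the Sobolev embeddings, the contribution of the near part to $\D_{c_n} \parens{v_n}$ is at most $C R^{1/2} \norm{v_n}_{\Sobolev^1}^4$ (since $\abs{z}^{-1}$ is square-integrable near the origin, this is small uniformly in $n$ once $R$ is small), while the contribution of the far part is at most $\frac{C}{R c_n^3} \norm{v_n}_{\Sobolev^1}^4$ (using $\norm{e^{- c_n \abs{\cdot}}}_{\Lebesgue^1} = 8 \pi c_n^{-3}$), which tends to $0$. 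Since $\K_{a, b} = \frac{4}{3} \D_b - \frac{1}{3} \D_a - \D_0$ and $\K_{\infty, \infty} = - \D_0$, this shows that $\E_{a_n, b_n}^V$ and $\E_{\infty, \infty}^V$ are uniformly close on $\Sobolev^1$-bounded sets along the given sequence; testing the infima against fixed elements of $\Sphere \parens{\mu}$ moreover yields $\limsup_n E_{a_n, b_n}^V \parens{\mu} \leq E_{\infty, \infty}^V \parens{\mu}$.

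The second step is the $\Sobolev^1$-boundedness of $\set{u_n}_{n \in \nat}$. Since $\D_a \leq \D_0$, one has $\K_{a, b} \geq - \frac{4}{3} \D_0$, while the Hardy--Littlewood--Sobolev and Gagliardo--Nirenberg inequalities give $\D_0 \parens{u_n} \leq C \mu^{3/2} \norm{\nabla u_n}_{\Lebesgue^2}$ (using $\norm{u_n}_{\Lebesgue^2}^2 = \mu$), which is subquadratic in $\norm{\nabla u_n}_{\Lebesgue^2}$; and, decomposing $V = V' + V''$ per~\ref{intro:hyp:integrability} with $\norm{V'}_{\Lebesgue^{3/2}}$ as small as desired and $V'' \in \Lebesgue^\infty$, one gets $\abs{\V \parens{u_n}} \leq \eps \norm{\nabla u_n}_{\Lebesgue^2}^2 + C_\eps$. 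Since $\E_{a_n, b_n}^V \parens{u_n} = E_{a_n, b_n}^V \parens{\mu}$ is bounded from above by the first step, coercivity forces $\sup_n \norm{\nabla u_n}_{\Lebesgue^2} < \infty$. Combining this with the first step, $\E_{\infty, \infty}^V \parens{u_n} = E_{a_n, b_n}^V \parens{\mu} + o(1) \leq E_{\infty, \infty}^V \parens{\mu} + o(1)$, whereas $u_n \in \Sphere \parens{\mu}$ forces $\E_{\infty, \infty}^V \parens{u_n} \geq E_{\infty, \infty}^V \parens{\mu}$; hence $\E_{\infty, \infty}^V \parens{u_n} \to E_{\infty, \infty}^V \parens{\mu}$, so that $\set{u_n}_{n \in \nat}$ is a minimizing sequence for $\E_{\infty, \infty}^V|_{\Sphere \parens{\mu}}$.

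It then remains to pass from this to strong convergence, which is exactly the compactness content of the proofs of Theorems~\ref{intro:thm:autonomous} and~\ref{intro:thm:nonautonomous}. In case~(1) ($V \equiv 0$), the concentration--compactness argument behind Theorem~\ref{intro:thm:autonomous} --- vanishing being excluded by $E_{\infty, \infty}^0 \parens{\mu} < 0$ (Theorem~\ref{intro:thm:autonomous}) and dichotomy by the strict subadditivity~\eqref{intro:eqn:SSC} --- produces $\set{x_n}_{n \in \nat} \subset \real^3$ along which $u_n \parens{\cdot - x_n}$ converges, first weakly and then strongly in $\Sobolev^1$ (no mass escapes and, by interpolation with the Sobolev embedding, every term of the energy converges), to some $u_\infty \in \Sphere \parens{\mu}$; since $\set{u_n}_{n \in \nat}$ is minimizing, $u_\infty$ attains $E_{\infty, \infty}^0 \parens{\mu}$, i.e.\ it is a ground state of~\eqref{intro:eqn:normalized_Choquard}. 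In case~(2), the nonautonomous argument behind Theorem~\ref{intro:thm:nonautonomous} --- which uses the energy deficiency~\ref{intro:hyp:energy_deficiency} (for $a = b = \infty$) to exclude both vanishing and dichotomy with no translation needed, $V$ pinning down the concentration, and the weak continuity of $\V$ (Lemma~\ref{prelim:lem:V_is_weakly_continuous}) for the potential term --- yields $u_n \to u_\infty$ strongly in $\Sobolev^1$ with $u_\infty$ a ground state of~\eqref{intro:eqn:normalized_Choquard}.

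The step I expect to be the main obstacle is this loss-of-compactness analysis. The difficulty is that $\set{u_n}_{n \in \nat}$ minimizes the \emph{varying} functional $\E_{a_n, b_n}^V$ rather than the limit functional, so the energy comparisons used to rule out vanishing and dichotomy --- in particular~\eqref{intro:eqn:SSC} and its nonautonomous counterpart from the proof of Theorem~\ref{intro:thm:nonautonomous} --- must be applied uniformly in $n$, and in case~(1) the translations $x_n$ have to be propagated consistently through all the estimates. A secondary technical point is making the bound $\D_{c_n} \parens{v_n} \to 0$ genuinely uniform over $\Sobolev^1$-bounded sequences, which is why the local singularity of $\abs{z}^{-1}$ and its exponential tail are separated above.
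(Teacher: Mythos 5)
Your proposal is correct and follows essentially the same route as the paper: show $E_{a_n,b_n}^V(\mu)\to E_{\infty,\infty}^V(\mu)$, deduce that $\set{u_n}$ is a bounded minimizing sequence for the limiting Choquard functional, and then invoke the compactness arguments of Theorems \ref{intro:thm:autonomous} and \ref{intro:thm:nonautonomous}. The only (harmless) difference is that you establish $\D_{c_n}(v_n)\to 0$ by splitting the kernel at $\abs{z}=R$, whereas the paper gets it in one stroke from Young's inequality via $\D_c(v)\leq \frac{4\pi}{c^2}\norm{v}_{\Lebesgue^4}^4$ (Lemma \ref{prelim:lem:L^4_estimate}).
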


We then prove that ground states of \eqref{intro:eqn:nonautonomous} tend to ground states of \eqref{intro:eqn:normalized_Choquard-type} as $\parens{a, b} \to \parens{0, \infty}$.

\begin{thm}
\label{intro:thm:asy-3}
Suppose that \ref{intro:hyp:integrability}, \ref{intro:hyp:vanishes_at_infinity} are satisfied, $\mu > 0$ and consider sequences
$\set{a_n}_{n \in \nat} \subset \coi{0, \infty}$,
$\set{b_n}_{n \in \nat} \subset \oci{0, \infty}$
such that $\parens{a_n, b_n} \to \parens{0, \infty}$ as
$n \to \infty$. Suppose further that given $n \in \nat$, $u_n$ denotes a ground state of \eqref{intro:eqn:nonautonomous} in the case $\parens{a, b} = \parens{a_n, b_n}$. Then the following implications hold.
\begin{enumerate}
\item
If $V \equiv 0$, then \eqref{intro:eqn:normalized_Choquard-type} has a ground state $u_\infty$ and there exists
$\set{x_n}_{n \in \nat} \subset \real^3$
such that
$
\lim_{n \to \infty}
\norm*{u_n \parens{\cdot - x_n} - u_\infty}_{\Sobolev^1}
=
0
$.
\item
If \ref{intro:hyp:energy_deficiency} is satisfied in the case
$\parens{a, b} = \parens{0, \infty}$, then \eqref{intro:eqn:normalized_Choquard-type} has a ground state $u_\infty$ such that
$
\lim_{n \to \infty}
\norm{u_n - u_\infty}_{\Sobolev^1}
=
0
$.
\end{enumerate}
\end{thm}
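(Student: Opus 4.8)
The plan is to reduce the theorem, via a comparison of energies, to the compactness of minimizing sequences for the limit problem \eqref{intro:eqn:normalized_Choquard-type}, which is the content of the proofs of Theorems \ref{intro:thm:autonomous} and \ref{intro:thm:nonautonomous}. The structural fact that drives the reduction is that $K_{0, \infty}$ is the pointwise minimum of the whole family: for every $0 \leq a, b \leq \infty$ and $x \neq 0$ one has $K_{0, \infty} \parens{x} = - \frac{4}{3 \abs{x}} \leq K_{a, b} \parens{x}$, since this is equivalent to $4 e^{- b \abs{x}} - e^{- a \abs{x}} \geq - 1$. In addition $\abs{K_{a, b} \parens{x}} \leq \frac{8}{3 \abs{x}}$ for all $a, b$, and $K_{a_n, b_n} \parens{x} \to K_{0, \infty} \parens{x}$ for a.e.\ $x$ whenever $\parens{a_n, b_n} \to \parens{0, \infty}$. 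Via the Hardy--Littlewood--Sobolev inequality and dominated convergence, these three facts yield, for every fixed $v \in \Sobolev^1$, that $\K_{a, b} \parens{v} \geq \K_{0, \infty} \parens{v}$ and $\K_{a_n, b_n} \parens{v} \to \K_{0, \infty} \parens{v}$; hence $\E_{a, b}^V \parens{v} \geq \E_{0, \infty}^V \parens{v}$ and $\E_{a_n, b_n}^V \parens{v} \to \E_{0, \infty}^V \parens{v}$, and likewise with $V$ replaced by $0$.

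From here I would first deduce the convergence of ground state energies $E_{a_n, b_n}^V \parens{\mu} \to E_{0, \infty}^V \parens{\mu}$ (and, in case (1), $E_{a_n, b_n}^0 \parens{\mu} \to E_{0, \infty}^0 \parens{\mu}$): taking the infimum over $\Sphere \parens{\mu}$ in $\E_{a_n, b_n}^V \geq \E_{0, \infty}^V$ gives $E_{a_n, b_n}^V \parens{\mu} \geq E_{0, \infty}^V \parens{\mu}$, while testing with an arbitrary fixed $v \in \Sphere \parens{\mu}$ and using $\E_{a_n, b_n}^V \parens{v} \to \E_{0, \infty}^V \parens{v}$ gives $\limsup_n E_{a_n, b_n}^V \parens{\mu} \leq E_{0, \infty}^V \parens{\mu}$. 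Next I would show that $\parens{u_n}_{n \in \nat}$ is bounded in $\Sobolev^1$: the coercivity estimates underlying Theorems \ref{intro:thm:autonomous} and \ref{intro:thm:nonautonomous} --- namely $\abs{\V \parens{u}} \leq \frac{1}{8} \A \parens{u} + C_1$ from \ref{intro:hyp:integrability}, \ref{intro:hyp:vanishes_at_infinity}, together with $\abs{\K_{a, b} \parens{u}} \leq \frac{8}{3} \D_0 \parens{u} \leq C_2 \mu^{3 / 2} \A \parens{u}^{1 / 2}$ from Hardy--Littlewood--Sobolev and Gagliardo--Nirenberg, the constants $C_1, C_2$ being independent of $a, b$ --- combined with $\E_{a_n, b_n}^V \parens{u_n} = E_{a_n, b_n}^V \parens{\mu}$ and $\sup_n E_{a_n, b_n}^V \parens{\mu} < \infty$ force $\sup_n \A \parens{u_n} < \infty$. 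Finally, since $\E_{0, \infty}^V \parens{u_n} \leq \E_{a_n, b_n}^V \parens{u_n} = E_{a_n, b_n}^V \parens{\mu} \to E_{0, \infty}^V \parens{\mu}$ while $\E_{0, \infty}^V \parens{u_n} \geq E_{0, \infty}^V \parens{\mu}$ (as $u_n \in \Sphere \parens{\mu}$), we obtain $\E_{0, \infty}^V \parens{u_n} \to E_{0, \infty}^V \parens{\mu}$; that is, $\parens{u_n}_{n \in \nat}$ is a minimizing sequence for $\E_{0, \infty}^V|_{\Sphere \parens{\mu}}$. In case (1) one argues identically with $V$ replaced by $0$ (here the hypothesis is non-vacuous because $b_n > 0$, so Theorem \ref{intro:thm:autonomous}(2) provides the $u_n$).

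It then remains to invoke the compactness of minimizing sequences for the limit problem. In case (2), since \ref{intro:hyp:energy_deficiency} holds for $\parens{a, b} = \parens{0, \infty}$ and $0 = a \leq 4 b = \infty$ with $0 < b = \infty \leq \infty$, the problem \eqref{intro:eqn:normalized_Choquard-type} is covered by Theorem \ref{intro:thm:nonautonomous}, and the concentration--compactness argument in its proof (ruling out vanishing and dichotomy of minimizing sequences by means of \ref{intro:hyp:energy_deficiency}, Theorem \ref{intro:thm:autonomous} and Lemma \ref{prelim:lem:V_is_weakly_continuous}) shows that every minimizing sequence of $\E_{0, \infty}^V|_{\Sphere \parens{\mu}}$ is precompact in $\Sobolev^1$, with every limit a ground state; applied to $\parens{u_n}_{n \in \nat}$ this yields, along a subsequence, $u_n \to u_\infty$ in $\Sobolev^1$ with $u_\infty$ a ground state of \eqref{intro:eqn:normalized_Choquard-type}. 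In case (1) the limit problem is autonomous with $b = \infty > 0$, so Theorem \ref{intro:thm:autonomous}(2) applies and the analogous argument --- now using the strict subadditivity \eqref{intro:eqn:SSC} to exclude dichotomy --- gives precompactness of minimizing sequences of $\E_{0, \infty}^0|_{\Sphere \parens{\mu}}$ in $\Sobolev^1$ \emph{up to translations}; applied to $\parens{u_n}_{n \in \nat}$ this produces $\set{x_n}_{n \in \nat} \subset \real^3$ with $u_n \parens{\cdot - x_n} \to u_\infty$ in $\Sobolev^1$. The negativity of the Lagrange multiplier of $u_\infty$ is then part of the statements of Theorems \ref{intro:thm:autonomous}, \ref{intro:thm:nonautonomous} themselves.

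I expect the delicate points to be twofold. First, one must verify that the coercivity constants $C_1, C_2$ are genuinely uniform in $\parens{a_n, b_n}$; this is precisely why the uniform bound $\abs{K_{a_n, b_n} \parens{x}} \leq \frac{8}{3 \abs{x}}$ is recorded above, as it lets one dominate the nonlocal term by $\D_0$ for all $n$ at once. Second --- and this is the genuine analytic core --- the entire argument rests on the compactness of minimizing sequences of the limit problem, which is supplied by the proofs of Theorems \ref{intro:thm:autonomous} and \ref{intro:thm:nonautonomous} but is itself the hard step; once it is in hand, the present theorem is a short energy-comparison argument. A minor additional point of care is the passage from subsequential convergence to convergence of the full sequence in case (2), which relies on the fact that every subsequential limit of $\parens{u_n}_{n \in \nat}$ is a ground state of \eqref{intro:eqn:normalized_Choquard-type}.
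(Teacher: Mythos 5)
Your proposal is correct and follows essentially the same route as the paper: the pointwise inequality $K_{a, b} \geq K_{0, \infty}$ yields the $\liminf$ bound, an energy comparison yields the $\limsup$ bound, the uniform coercivity of Lemma \ref{prelim:lem:uniform_estimate} gives boundedness, and the conclusion is delegated to the compactness arguments in the proofs of Theorems \ref{intro:thm:autonomous} and \ref{intro:thm:nonautonomous}. The only (harmless) deviation is in the $\limsup$ inequality: you test with an arbitrary $v \in \Sphere\parens{\mu}$ and pass to the limit by dominated convergence before taking the infimum, whereas the paper tests with a ground state $v_\infty$ of \eqref{intro:eqn:normalized_Choquard-type} and quantifies $\K_{a_n, b_n}\parens{v_\infty} - \K_{0, \infty}\parens{v_\infty} \to 0$ via Lemmas \ref{asy-2:lem:L^2_estimate} and \ref{prelim:lem:L^4_estimate}.
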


\subsection*{Organization of the text}
\begin{itemize}
\item
Section \ref{prelim} contains technical preliminaries.
\item
Section \ref{autonomous} is concerned with the proof of Theorem \ref{intro:thm:autonomous}.
\item
Section \ref{nonautonomous} provides the proofs of Proposition \ref{intro:prop:energy_deficiency} and Theorem \ref{intro:thm:nonautonomous}.
\item
Section \ref{asy} covers Theorems \ref{intro:thm:asy-1}--\ref{intro:thm:asy-3}.
\item
Appendix \ref{physics} explains the physical motivation for considering \eqref{intro:eqn:nonautonomous}.
\item
Appendix \ref{study} provides a detailed study of the geometry of $K_{a, b}$ and the respective graphs in function of the considered parameters $a, b$.
\end{itemize}

\subsection*{Acknowledgement}

This study was financed, in part, by the São Paulo Research Foundation (FAPESP), Brasil. Process Number \#2024/20593-0.

\section{Preliminaries}
\label{prelim}
The following result is a corollary of the Gagliardo--Nirenberg inequality \cite[Theorem 12.87]{leoniFirstCourseSobolev2017}.
\begin{lem}
\label{prelim:lem:GN}
If $2 \leq s \leq 6$, then there exists $C = C \parens{s} > 0$ such that
\[
\norm{u}_{\Lebesgue^s}
\leq
C
\A \parens{u}^{\frac{3 \parens{s - 2}}{4 s}}
\parens*{
	\int u \parens{x}^2 \dif x
}^{\frac{6 - s}{4 s}}
\]
for every $u \in \Sobolev^1$.
\end{lem}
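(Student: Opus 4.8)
The plan is to recognize the estimate as an immediate consequence of two classical facts in $\real^3$: the Sobolev inequality $\norm{u}_{\Lebesgue^6} \leq C_0 \A \parens{u}^{1/2}$ and the elementary fact that $\Lebesgue^s$ interpolates between $\Lebesgue^2$ and $\Lebesgue^6$ whenever $2 \leq s \leq 6$. Equivalently, the statement is just a notational rephrasing of the Gagliardo--Nirenberg interpolation inequality \cite[Theorem 12.87]{leoniFirstCourseSobolev2017} specialized to the exponents at hand, with the right-hand side expressed through $\A \parens{u} = \norm{\nabla u}_{\Lebesgue^2}^2$ and $\int u^2 = \norm{u}_{\Lebesgue^2}^2$ rather than through the norms themselves; so it only remains to check that the exponents agree.

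First I would dispose of the two endpoints. When $s = 2$ one has $\frac{3 \parens{s - 2}}{4 s} = 0$ and $\frac{6 - s}{4 s} = \frac{1}{2}$, so the claim reduces to $\norm{u}_{\Lebesgue^2} \leq C \parens*{\int u^2}^{1/2}$, which holds with $C = 1$; when $s = 6$ the exponents become $\frac{1}{2}$ and $0$, so the claim is exactly the Sobolev inequality $\norm{u}_{\Lebesgue^6} \leq C_0 \A \parens{u}^{1/2}$. For $2 < s < 6$ I would set $\frac{1}{s} = \frac{1 - \theta}{2} + \frac{\theta}{6}$ and solve, obtaining $\theta = \frac{3 \parens{s - 2}}{2 s} \in \parens{0, 1}$ and $1 - \theta = \frac{6 - s}{2 s}$. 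By Hölder's inequality, $\norm{u}_{\Lebesgue^s} \leq \norm{u}_{\Lebesgue^2}^{1 - \theta} \norm{u}_{\Lebesgue^6}^{\theta}$, and bounding the last factor via the Sobolev inequality yields $\norm{u}_{\Lebesgue^s} \leq C_0^{\theta} \A \parens{u}^{\theta / 2} \parens*{\int u^2}^{\parens{1 - \theta} / 2}$. Since $\theta / 2 = \frac{3 \parens{s - 2}}{4 s}$ and $\parens{1 - \theta} / 2 = \frac{6 - s}{4 s}$, this is precisely the asserted inequality with $C \parens{s} := C_0^{\theta}$.

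There is no genuine obstacle here. The only points that call for any care are the arithmetic of the interpolation exponent $\theta$ and --- if one invokes a form of the Sobolev or Gagliardo--Nirenberg inequality stated only for $u \in C_c^\infty \parens{\real^3}$ --- a routine density argument extending the estimate to all $u \in \Sobolev^1$, using that both sides vary continuously along $\Sobolev^1$-convergent sequences for $2 \leq s \leq 6$. I would record this in a single sentence and otherwise present the lemma as a direct corollary of the cited interpolation inequality.
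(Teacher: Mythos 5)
Your proof is correct and amounts to the same thing as the paper, which offers no argument beyond citing the Gagliardo--Nirenberg inequality; your interpolation-plus-Sobolev derivation (with $\theta = \tfrac{3(s-2)}{2s}$, so $\theta/2$ and $(1-\theta)/2$ match the stated exponents) is exactly the standard unfolding of that citation in $\real^3$.
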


We recall a particular case of the Hardy--Littlewood--Sobolev inequality \cite[Theorem 4.3]{liebAnalysis2001}.
\begin{lem}
\label{prelim:lem:HLS}
\begin{enumerate}
\item
If $1 < \theta_1, \theta_2 < \infty$ and
$\frac{1}{\theta_1} + \frac{1}{\theta_2} = \frac{5}{3}$,
then there exists $C = C \parens{\theta_1} > 0$ such that
\[
\abs*{
	\int \int
		\frac{f \parens{x} g \parens{y}}{\abs{x - y}}
	\dif x \ddif y
}
\leq
C
\norm{f}_{\Lebesgue^{\theta_1}}
\norm{g}_{\Lebesgue^{\theta_2}}
\]
for every $f \in \Lebesgue^{\theta_1}$ and $g \in \Lebesgue^{\theta_2}$.
\item
Given $t \in \ooi{\frac{1}{2}, \frac{3}{4}}$, there exists
$C = C \parens{t} > 0$ such that
\[
\int \int
	\parens*{
		\frac{u \parens{y}^2}{\abs{x - y}}
	}^{\frac{3 t}{3 - 2 t}}
\dif x \ddif y
\leq
C
\parens*{
	\int \abs*{u \parens{x}}^{2 t} \dif x
}^{\frac{3}{3 - 2 t}}
\]
for every $u \in \Lebesgue^{2 t}$.
\end{enumerate}
\end{lem}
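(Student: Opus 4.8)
The plan is to obtain both parts directly from the Hardy--Littlewood--Sobolev inequality, with no analytic input beyond \cite[Theorem 4.3]{liebAnalysis2001}.

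Part (1) is a pure restatement: taking dimension $N = 3$ and Riesz exponent $\lambda = 1$ in \cite[Theorem 4.3]{liebAnalysis2001}, the admissibility condition $\frac{1}{p} + \frac{1}{q} + \frac{\lambda}{N} = 2$ becomes exactly $\frac{1}{\theta_1} + \frac{1}{\theta_2} = \frac{5}{3}$, and the cited theorem furnishes a constant depending only on one of the two exponents (the other being then determined). So there is nothing to do beyond matching notation.

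For part (2), the strategy is to view the left-hand side as a power of a Lebesgue norm of the Riesz potential $\abs{\cdot}^{-1} \ast u^2$ and to invoke the mapping properties of this potential encoded in part (1). Concretely, set $s := \frac{3t}{3 - 2t}$ and record the two elementary facts that $u \in \Lebesgue^{2t}$ implies $u^2 \in \Lebesgue^t$ with $\norm{u^2}_{\Lebesgue^t} = \norm{u}_{\Lebesgue^{2t}}^2$, and that $\frac{3}{3 - 2t} = \frac{s}{t}$, so that the claimed inequality is homogeneous of the correct degree in $u$. One then applies part (1) --- either in a dual/convolution form, pairing $u^2 \in \Lebesgue^t$ against a test function, or by a direct application of H\"older's inequality --- choosing the free exponent so that the balance $\frac{1}{\theta_1} + \frac{1}{\theta_2} = \frac{5}{3}$ holds; raising the resulting estimate to the power $s$ and simplifying then gives the claim. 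The role of the hypothesis $t \in \ooi{\frac{1}{2}, \frac{3}{4}}$ is precisely to guarantee that the two exponents entering this application of part (1) --- namely $t$ and the exponent conjugate to $s$ --- lie in $\ooi{1, \infty}$; outside this window the reduction breaks down.

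The only real obstacle is the exponent bookkeeping: one must check that the substitution forced by $s = \frac{3t}{3 - 2t}$ produces admissible exponents precisely for $t \in \ooi{\frac{1}{2}, \frac{3}{4}}$, that the scaling balance of part (1) is respected, and that raising the intermediate bound to the power $s$ reproduces exactly the exponent $\frac{3}{3 - 2t}$ on the right-hand side. Beyond this arithmetic, no rearrangement, concentration-compactness or further functional inequality is needed, since all of the analysis is already contained in part (1).
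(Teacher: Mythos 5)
Your treatment of part (1) is fine and coincides with what the paper does: the lemma is presented as a recalled special case of \cite[Theorem 4.3]{liebAnalysis2001} (take $N = 3$, $\lambda = 1$), with no further argument, so beyond matching exponents and noting that the constant depends only on $\theta_1$ there is nothing to prove, and you match them correctly.

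Part (2) is where your proposal fails, and it fails at exactly the step you postpone as ``exponent bookkeeping''. You claim that the hypothesis $t \in \ooi{\frac{1}{2}, \frac{3}{4}}$ is ``precisely'' what guarantees that the two exponents entering your application of part (1) --- namely $t$ (the Lebesgue exponent of $u^2$) and the exponent conjugate to $s := \frac{3t}{3 - 2t}$ --- lie in $\ooi{1, \infty}$. This is false: for every $t \in \ooi{\frac{1}{2}, \frac{3}{4}}$ one has $t < 1$, so the exponent $t$ is inadmissible in part (1); moreover $s$ ranges over $\ooi{\frac{3}{4}, \frac{3}{2}}$, so for $t \leq \frac{3}{5}$ one has $s \leq 1$ and the conjugate exponent $s'$ does not even lie in $\ooi{1, \infty}$. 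The identity $\frac{1}{t} + \frac{1}{s'} = \frac{5}{3}$ does hold whenever $s' $ makes sense, but that is irrelevant since the requirement $t > 1$ fails throughout the stated window. The duality reduction you sketch is the right mechanism for an inequality of this shape, but it operates on the range $t \in \ooi{1, \frac{3}{2}}$ (equivalently $u \in \Lebesgue^{2t}$ with $2t \in \ooi{2, 3}$, and with the left-hand side read as $\int \parens{\int \frac{u \parens{y}^2}{\abs{x - y}} \dif y}^{s} \dif x$); on the printed range it cannot be made to work, and indeed the left-hand side as literally displayed --- the power applied inside the double integral --- equals $+\infty$ for every $u \not\equiv 0$, because $\int \abs{x - y}^{-s} \dif x$ diverges at infinity for $s < 3$. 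Had you carried out the check you defer, you would have discovered this incompatibility; as written, the proposal asserts rather than verifies the one point on which all of part (2) rests, and the assertion is false for the statement as printed. Either the lemma's range (and the placement of the exponent) is a typo for $t \in \ooi{1, \frac{3}{2}}$, in which case your outline closes after the correction, or a genuinely different argument is required.
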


The next result follows directly from \cite[Lemma 2.2]{dinhBlowbehaviorPrescribedMass2020}.

\begin{lem}
\label{prelim:lem:convenient_decomposition}
If \ref{intro:hyp:integrability} and \ref{intro:hyp:vanishes_at_infinity} are satisfied, then we can associate each $\eps > 0$ with a constant $C = C \parens{\eps} > 0$ and functions
$
V_{\frac{3}{2}} = V_{\frac{3}{2}} \parens{\eps}
\in
\Lebesgue^{\frac{3}{2}}
$,
$V_\infty = V_\infty \parens{\eps} \in \Lebesgue^\infty$
such that $V = V_{\frac{3}{2}} + V_\infty$,
$\norm{V_{\frac{3}{2}}}_{\Lebesgue^{\frac{3}{2}}} < \eps$
and
$\norm{V_\infty}_{\Lebesgue^\infty} < C$.
\end{lem}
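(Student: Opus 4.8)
The plan is to reduce the claim to a purely scalar truncation of an $\Lebesgue^{\frac{3}{2}}$ function. First, using \ref{intro:hyp:integrability}, I would fix once and for all a splitting $V = W + U$ with $W \in \Lebesgue^{\frac{3}{2}}$ and $U \in \Lebesgue^{\infty}$, and set $N := \norm{U}_{\Lebesgue^{\infty}}$. For each threshold $M > 0$, decompose $W = W_{M} + W^{M}$, where $W_{M} := W \, \mathbf{1}_{\{\abs{W} \le M\}}$ and $W^{M} := W \, \mathbf{1}_{\{\abs{W} > M\}}$; then $\norm{W_{M}}_{\Lebesgue^{\infty}} \le M$ and, since $\abs{W^{M}} \le \abs{W}$, we have $W^{M} \in \Lebesgue^{\frac{3}{2}}$.

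The only point requiring an argument is the vanishing of the $\Lebesgue^{\frac{3}{2}}$ tail: one checks that $\norm{W^{M}}_{\Lebesgue^{\frac{3}{2}}} \to 0$ as $M \to \infty$. Indeed, $\abs{W^{M}}^{\frac{3}{2}} \le \abs{W}^{\frac{3}{2}} \in \Lebesgue^{1}$ for every $M$, and $\abs{W^{M}(x)}^{\frac{3}{2}} \to 0$ for a.e.\ $x$ as $M \to \infty$ (as $W$ is finite a.e.), so dominated convergence applies; equivalently, this is absolute continuity of the integral of $\abs{W}^{\frac{3}{2}}$. Consequently, given $\eps > 0$, I would choose $M = M(\eps) > 0$ with $\norm{W^{M(\eps)}}_{\Lebesgue^{\frac{3}{2}}} < \eps$ and set
\[
V_{\frac{3}{2}} := W^{M(\eps)}, \qquad V_{\infty} := W_{M(\eps)} + U, \qquad C := C(\eps) := M(\eps) + N + 1 .
\]
Then $V_{\frac{3}{2}} \in \Lebesgue^{\frac{3}{2}}$, $V_{\infty} \in \Lebesgue^{\infty}$, $V = V_{\frac{3}{2}} + V_{\infty}$, $\norm{V_{\frac{3}{2}}}_{\Lebesgue^{\frac{3}{2}}} < \eps$ and $\norm{V_{\infty}}_{\Lebesgue^{\infty}} \le M(\eps) + N < C(\eps)$, which is exactly the assertion.

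I do not expect any genuine obstacle: the argument is a routine truncation, and the only non-bookkeeping ingredient is the dominated-convergence step above. I would remark that this argument uses only \ref{intro:hyp:integrability}; hypothesis \ref{intro:hyp:vanishes_at_infinity} is retained because it is the standing assumption under which the cited reference is phrased (and because one could alternatively truncate $V$ itself, invoking \ref{intro:hyp:vanishes_at_infinity} to control the measure of $\{\abs{V} > M\}$ and then \ref{intro:hyp:integrability} to bound the $\Lebesgue^{\frac{3}{2}}$-mass on that set).
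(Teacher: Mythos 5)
Your proof is correct: the truncation $W = W\,\mathbf{1}_{\{\abs{W}\le M\}} + W\,\mathbf{1}_{\{\abs{W}>M\}}$ together with dominated convergence for the $\Lebesgue^{3/2}$ tail is exactly the standard argument, and the paper itself gives no proof here — it only cites \cite[Lemma 2.2]{dinhBlowbehaviorPrescribedMass2020}, which is established by the same truncation. Your side remark is also accurate: the decomposition uses only \ref{intro:hyp:integrability}, and \ref{intro:hyp:vanishes_at_infinity} plays no role in this particular statement.
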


In view of the previous results, we can obtain a coercivity estimate for $\E_{a, b}|_{\Sphere \parens{\mu}}$ that does not depend on $a, b$.

\begin{lem}
\label{prelim:lem:uniform_estimate}
If \ref{intro:hyp:integrability} and \ref{intro:hyp:vanishes_at_infinity} are satisfied, then there exists $C_2 > 0$ and we can associate each $\eps \in \ooi{0, 1}$ with a $C_1 = C_1 \parens{\eps} > 0$ such that
\[
\E_{a, b} \parens{u}
\geq
\frac{1 - \eps}{2} \A \parens{u}
-
C_1 \mu
-
C_2 \A \parens{u}^{\frac{1}{2}} \mu^{\frac{3}{2}}
\]
for every $\mu > 0$, $u \in \Sphere \parens{\mu}$ and
$a, b \in \cci{0, \infty}$. It follows that given $\mu > 0$ and
$a, b \in \cci{0, \infty}$, $\E_{a, b}|_{\Sphere \parens{\mu}}$ is coercive and $E_{a, b} \parens{\mu} > - \infty$.
\end{lem}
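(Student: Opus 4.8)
The plan is to bound each of the three terms in $\E_{a, b} \parens{u} = \frac{1}{2} \A \parens{u} + \frac{1}{2} \V \parens{u} + \frac{1}{4} \K_{a, b} \parens{u}$ separately from below, taking care that all constants depend on neither $a$ nor $b$, and then to combine the bounds.

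First I would handle the potential term. Applying Lemma \ref{prelim:lem:convenient_decomposition} with a parameter $\delta > 0$ to be fixed later, write $V = V_{\frac{3}{2}} + V_\infty$ with $\norm{V_{\frac{3}{2}}}_{\Lebesgue^{\frac{3}{2}}} < \delta$ and $\norm{V_\infty}_{\Lebesgue^\infty} < C \parens{\delta}$. Hölder's inequality with the conjugate exponents $\frac{3}{2}$ and $3$ then gives $\abs{\V \parens{u}} \leq \delta \norm{u}_{\Lebesgue^6}^2 + C \parens{\delta} \mu$, and Lemma \ref{prelim:lem:GN} with $s = 6$ (equivalently the Sobolev embedding) yields $\norm{u}_{\Lebesgue^6}^2 \leq C \A \parens{u}$. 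Choosing $\delta$ so that $\delta C = \eps$ turns this into $\frac{1}{2} \A \parens{u} + \frac{1}{2} \V \parens{u} \geq \frac{1 - \eps}{2} \A \parens{u} - C_1 \parens{\eps} \mu$, where $\eps \in \ooi{0, 1}$ guarantees the coefficient of $\A \parens{u}$ stays positive.

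Next I would handle the Coulomb-type term, and this is where the uniformity in $a, b$ enters. Since $0 \leq e^{- c \abs{x - y}} \leq 1$ for every $c \in \cci{0, \infty}$ (with the conventions $e^{- 0 \abs{\cdot}} \equiv 1$, $e^{- \infty \abs{\cdot}} \equiv 0$), we have $0 \leq \D_c \parens{u} \leq \D_0 \parens{u}$ uniformly in $c$, so that $\K_{a, b} \parens{u} = \frac{4}{3} \D_b \parens{u} - \frac{1}{3} \D_a \parens{u} - \D_0 \parens{u} \geq - \frac{4}{3} \D_0 \parens{u}$. It then remains to control $\D_0 \parens{u}$: the Hardy--Littlewood--Sobolev inequality (Lemma \ref{prelim:lem:HLS}(1) with $\theta_1 = \theta_2 = \frac{6}{5}$, so that $\frac{1}{\theta_1} + \frac{1}{\theta_2} = \frac{5}{3}$) gives $\D_0 \parens{u} \leq C \norm{u}_{\Lebesgue^{\frac{12}{5}}}^4$, and Lemma \ref{prelim:lem:GN} with $s = \frac{12}{5}$ (whose exponents work out to $\frac{1}{8}$ and $\frac{3}{8}$) gives $\norm{u}_{\Lebesgue^{\frac{12}{5}}}^4 \leq C \A \parens{u}^{\frac{1}{2}} \mu^{\frac{3}{2}}$. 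Hence $\frac{1}{4} \K_{a, b} \parens{u} \geq - C_2 \A \parens{u}^{\frac{1}{2}} \mu^{\frac{3}{2}}$ with $C_2$ independent of $a, b$, and adding the two estimates yields the asserted inequality.

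Finally, for coercivity and finiteness I would specialize to $\eps = \frac{1}{2}$ and use Young's inequality to absorb the middle power, $C_2 \A \parens{u}^{\frac{1}{2}} \mu^{\frac{3}{2}} \leq \frac{1}{8} \A \parens{u} + 2 C_2^2 \mu^3$, which gives $\E_{a, b} \parens{u} \geq \frac{1}{8} \A \parens{u} - C_1 \mu - 2 C_2^2 \mu^3$. Since $\norm{u}_{\Sobolev^1}^2 = \A \parens{u} + \mu$ on $\Sphere \parens{\mu}$, this shows that $\E_{a, b}|_{\Sphere \parens{\mu}}$ is coercive and $E_{a, b} \parens{\mu} \geq - C_1 \mu - 2 C_2^2 \mu^3 > - \infty$. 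The only genuine point requiring care is the uniformity in $a, b \in \cci{0, \infty}$, but this is handled cleanly by the pointwise domination $\D_c \leq \D_0$ together with the nonnegativity $\D_c \geq 0$ (so no summand of $\K_{a, b}$ can be more negative than $- \frac{4}{3} \D_0$); the rest is routine Hölder bookkeeping and interpolation.
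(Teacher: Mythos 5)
Your proof is correct and follows essentially the same route as the paper: the potential term is absorbed via Lemma \ref{prelim:lem:convenient_decomposition} together with H\"older and the Sobolev embedding, and the nonlocal term is controlled uniformly in $a, b$ by reducing to $\D_0$ and then applying Hardy--Littlewood--Sobolev with $\theta_1 = \theta_2 = \frac{6}{5}$ followed by Gagliardo--Nirenberg with $s = \frac{12}{5}$. Your termwise comparison $0 \leq \D_c \leq \D_0$ is just a repackaging of the paper's pointwise bound $\abs{K_{a, b} \parens{x}} \leq \frac{4}{3 \abs{x}}$, and your exponent bookkeeping and the final Young-inequality step for coercivity are all accurate.
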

\begin{proof}
~ \paragraph{Association of $\eps > 0$ with $C_1 = C_1 \parens{\eps} > 0$.}
Corollary of Lemma \ref{prelim:lem:convenient_decomposition}.

\paragraph{Existence of the constant $C_2 > 0$.}
It is clear that
$\abs{K_{a, b} \parens{x}} \leq \frac{4}{3 \abs{x}}$
for every $a, b \in \cci{0, \infty}$. As such, it follows from Lemma \ref{prelim:lem:HLS} that there exists $C_2' > 0$ such that
$
\abs{\K_{a, b} \parens{u}}
\leq
C_2' \norm{u}_{\Lebesgue^{\frac{12}{5}}}^4
$
for every $a, b \in \cci{0, \infty}$ and
$u \in \Lebesgue^{\frac{12}{5}}$. In view of this inequality, the result follows from an application of Lemma \ref{prelim:lem:GN}.
\end{proof}

Given $c \in \ooi{0, \infty}$, the function
$
\real^3 \setminus \set{0} \ni x
\mapsto
\frac{e^{- c \abs{x}}}{\abs{x}}
$
is integrable. In particular, we obtain the estimate that follows.
\begin{lem}
Given $c \in \ooi{0, \infty}$ it holds that
\label{prelim:lem:L^4_estimate}
\[
\D_c \parens{v}
=
\int \int
	\frac{e^{- c \abs{x - y}}}{\abs{x - y}}
	v \parens{x}^2
	v \parens{y}^2
\dif x \ddif y
\leq
\frac{4 \pi}{c^2}
\norm{v}_{\Lebesgue^4}^4
\]
for every $v \in \Lebesgue^4$. In view of Lemma \ref{prelim:lem:GN}, there exists $K > 0$ such that
\[
\D_c \parens{v}
\leq
\frac{K}{c^2}
\A \parens{v}^{\frac{3}{2}}
\parens*{\int v \parens{x}^2 \dif x}^{\frac{1}{2}}
\]
for every $c \in \ooi{0, \infty}$ and $v \in \Sobolev^1$.
\end{lem}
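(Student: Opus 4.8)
The plan is to recognize $\D_c \parens{v}$ as a convolution pairing and apply Young's inequality for convolutions. Set $g_c \parens{x} := e^{- c \abs{x}} / \abs{x}$; as noted just before the statement, $g_c$ is integrable on $\real^3$ precisely because $c > 0$, and by passing to spherical coordinates and integrating by parts (or invoking the Gamma function) one computes
\[
\norm{g_c}_{\Lebesgue^1}
=
4 \pi \int_0^\infty r\, e^{- c r} \dif r
=
\frac{4 \pi}{c^2}.
\]
Since the integrand defining $\D_c \parens{v}$ is nonnegative, Tonelli's theorem gives $\D_c \parens{v} = \int \parens*{g_c \ast v^2} \parens{x}\, v \parens{x}^2 \dif x$ whenever $v \in \Lebesgue^4$ (so that $v^2 \in \Lebesgue^2$). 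Then the Cauchy--Schwarz inequality followed by Young's inequality yields
\[
\D_c \parens{v}
\leq
\norm*{g_c \ast v^2}_{\Lebesgue^2}
\norm{v^2}_{\Lebesgue^2}
\leq
\norm{g_c}_{\Lebesgue^1}
\norm{v^2}_{\Lebesgue^2}^2
=
\frac{4 \pi}{c^2} \norm{v}_{\Lebesgue^4}^4,
\]
which is the first claimed estimate.

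For the second estimate, I would feed the Gagliardo--Nirenberg inequality of Lemma \ref{prelim:lem:GN} with $s = 4$ into the bound just obtained: there is $C = C \parens{4} > 0$ with $\norm{v}_{\Lebesgue^4} \leq C\, \A \parens{v}^{3/8} \parens*{\int v \parens{x}^2 \dif x}^{1/8}$ for every $v \in \Sobolev^1$, hence $\norm{v}_{\Lebesgue^4}^4 \leq C^4 \A \parens{v}^{3/2} \parens*{\int v \parens{x}^2 \dif x}^{1/2}$. Combining this with the first estimate gives the second one with $K := 4 \pi C^4$, which is manifestly independent of $c$.

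There is no real obstacle here: the only points requiring mild care are getting the correct Lebesgue exponents in Young's inequality (convolving an $\Lebesgue^1$ function with an $\Lebesgue^2$ function lands in $\Lebesgue^2$) and computing $\norm{g_c}_{\Lebesgue^1}$ exactly so as to produce the explicit constant $4 \pi / c^2$ rather than an implicit one. It is worth stressing that the finiteness of $\norm{g_c}_{\Lebesgue^1}$ genuinely fails at $c = 0$, which is why the statement restricts to $c \in \ooi{0, \infty}$ and dovetails with the integrability remark preceding it.
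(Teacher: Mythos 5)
Your argument is correct and is essentially identical to the paper's proof: both bound $\D_c(v)$ by pairing $g_c \ast v^2$ with $v^2$ via Cauchy--Schwarz, control $\norm{g_c \ast v^2}_{\Lebesgue^2}$ by Young's inequality using $\norm{g_c}_{\Lebesgue^1} = 4\pi/c^2$, and then invoke Lemma \ref{prelim:lem:GN} with $s = 4$ for the second estimate. No discrepancies to report.
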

\begin{proof}
It follows from Young's inequality that the function
\[
\real^3 \ni x
\mapsto
\int
	\frac{e^{- c \abs{x - y}}}{\abs{x - y}}
	v \parens{y}^2
\dif y
\in
\real
\]
is square-integrable and
\[
\norm*{
	x
	\mapsto
	\int
		\frac{e^{- c \abs{x - y}}}{\abs{x - y}}
		v \parens{y}^2
	\dif y
}_{\Lebesgue^2}
\leq
\parens*{\int \frac{e^{- c \abs{x}}}{\abs{x}}\dif x}
\norm{v}_{\Lebesgue^4}^2
=
\frac{4 \pi}{c^2}
\norm{v}_{\Lebesgue^4}^2.
\]
As such, it follows from Hölder's inequality that
\begin{align*}
\int \int
	\frac{e^{- c \abs{x - y}}}{\abs{x - y}}
	v \parens{x}^2
	v \parens{y}^2
\dif x \ddif y
&\leq
\norm*{
	x
	\mapsto
	\int
		\frac{e^{- c \abs{x - y}}}{\abs{x - y}}
		v \parens{y}^2
	\dif y
}_{\Lebesgue^2}
\norm{v^2}_{\Lebesgue^2}
\\
&\leq
\frac{4 \pi}{c^2}
\norm{v}_{\Lebesgue^4}^4.
\end{align*}
\end{proof}

We recall the following well-known result.

\begin{lem}[{\cite[Theorem 11.4]{liebAnalysis2001}}]
\label{prelim:lem:V_is_weakly_continuous}
If \ref{intro:hyp:integrability}, \ref{intro:hyp:vanishes_at_infinity} hold, then given
$
\set{u_n}_{n \in \nat \cup \set{\infty}}
\subset
\Sobolev^1
$
such that $u_n \rightharpoonup u_\infty$ in $\Sobolev^1$ as
$n \to \infty$, it holds that
$\V \parens{u_n} \to \V \parens{u_\infty}$ as $n \to \infty$.
\end{lem}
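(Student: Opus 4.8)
The plan is to write $\V(u_n) - \V(u_\infty) = \int V \parens{u_n^2 - u_\infty^2} \dif x = \int V v_n w_n \dif x$ with $v_n := u_n - u_\infty$ and $w_n := u_n + u_\infty$, and to show the right-hand side tends to $0$. From $u_n \rightharpoonup u_\infty$ in $\Sobolev^1$ one gets $\sup_n \norm{u_n}_{\Sobolev^1} < \infty$ and, by weak lower semicontinuity of the norm, $\norm{u_\infty}_{\Sobolev^1} < \infty$; hence, via the Sobolev embedding and interpolation (Lemma \ref{prelim:lem:GN}), the sequences $\set{v_n}$ and $\set{w_n}$ are bounded in $\Lebesgue^s$ for every $2 \leq s \leq 6$, while $v_n \rightharpoonup 0$ in $\Sobolev^1$. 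So the task reduces to proving $\int V v_n w_n \dif x \to 0$.

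I would fix $\eps > 0$ and invoke Lemma \ref{prelim:lem:convenient_decomposition} to split $V = V_{\frac{3}{2}} + V_\infty$ with $\norm{V_{\frac{3}{2}}}_{\Lebesgue^{3/2}} < \eps$ and $M := \norm{V_\infty}_{\Lebesgue^\infty} < \infty$. Hölder's inequality with exponents $\tfrac{3}{2}, 6, 6$ together with the Sobolev embedding bounds the first contribution by $\norm{V_{\frac{3}{2}}}_{\Lebesgue^{3/2}} \norm{v_n}_{\Lebesgue^6} \norm{w_n}_{\Lebesgue^6} \leq C \eps$ uniformly in $n$, where $C$ depends only on the Sobolev constant and on $\sup_n \norm{v_n}_{\Sobolev^1}\norm{w_n}_{\Sobolev^1}$. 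For the second contribution $\int V_\infty v_n w_n \dif x$ I would split over a ball $\ball_R$ and its complement. On $\ball_R$, the bound $M \norm{v_n}_{\Lebesgue^2(\ball_R)} \norm{w_n}_{\Lebesgue^2(\ball_R)}$ together with the compact embedding $\Sobolev^1(\ball_R) \hookrightarrow \Lebesgue^2(\ball_R)$ (Rellich--Kondrachov), applied to $v_n \rightharpoonup 0$, gives convergence to $0$ as $n \to \infty$ for each fixed $R$. On $\ball_R^c$ I would split once more according to whether $\abs{V_\infty} \leq \delta$ or $\abs{V_\infty} > \delta$: the first region contributes at most $\delta \norm{v_n}_{\Lebesgue^2}\norm{w_n}_{\Lebesgue^2}$, which is $< \eps$ once $\delta$ is chosen small; on the second region $E_R := \ball_R^c \cap \set{\abs{V_\infty} > \delta}$ I would use Hölder in the form $\int_{E_R}\abs{v_n w_n}\dif x \leq \norm{v_n}_{\Lebesgue^6}\norm{w_n}_{\Lebesgue^6}\,\mathrm{meas}(E_R)^{2/3}$ and observe that $\mathrm{meas}(E_R) \to 0$ as $R \to \infty$, because $\set{\abs{V_\infty} > \delta}$ has finite measure — inherited from \ref{intro:hyp:vanishes_at_infinity} for $V$ and from Chebyshev's inequality for $V_{\frac{3}{2}} \in \Lebesgue^{3/2}$, since $V_\infty = V - V_{\frac{3}{2}}$ — while $\ball_R^c$ shrinks to the empty set. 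Choosing $R$ large then makes $M\int_{E_R}\abs{v_n w_n}\dif x < \eps$ uniformly in $n$. Passing to the limit in $n$ and then letting $\eps \downarrow 0$ concludes.

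The main obstacle is the tail estimate on $\ball_R^c$: this is the only place where hypothesis \ref{intro:hyp:vanishes_at_infinity} genuinely enters, and the point is that vanishing at infinity confines the region where $\abs{V_\infty}$ fails to be small to a set of finite measure, which can therefore be pushed out towards infinity and then controlled by trading the $\Lebesgue^\infty$-bound on $V_\infty$ against the uniform $\Lebesgue^6$-bounds on $v_n$ and $w_n$. Everything else — the $\Lebesgue^{3/2}$ piece and the local piece — is routine, resting only on Lemma \ref{prelim:lem:convenient_decomposition}, Hölder's inequality, and Rellich--Kondrachov compactness.
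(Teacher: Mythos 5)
Your argument is correct and complete: the paper itself offers no proof of this lemma, citing it directly as \cite[Theorem 11.4]{liebAnalysis2001}, and your decomposition of $V$ via Lemma \ref{prelim:lem:convenient_decomposition}, combined with H\"older for the small $\Lebesgue^{3/2}$ part, Rellich--Kondrachov on balls, and the finite-measure tail estimate coming from \ref{intro:hyp:vanishes_at_infinity}, is essentially the standard proof of that cited result. No gaps.
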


The following Brézis--Lieb splitting is proved by arguing as in the proof of \cite[Lemma 2.2]{zhaoExistenceSolutionsSchrodinger2008} or \cite[Proposition 4.2]{boerStandingWavesNonlinear2025}.

\begin{lem}
\label{prelim:lem:BL}
Suppose that $0 \leq m < \infty$. Suppose further that
$\set{u_n}_{n \in \nat \cup \set{\infty}} \subset \Sobolev^1$
are such that $u_n \rightharpoonup u_\infty$ in $\Sobolev^1$ and
$u_n \to u_\infty$ a.e. as $n \to \infty$. Then
\[
\D_m \parens{u_n, u_n}
-
\D_m \parens{u_n - u_\infty, u_n - u_\infty}
\xrightarrow[n \to \infty]{}
\D_m \parens{u_\infty, u_\infty}.
\]
\end{lem}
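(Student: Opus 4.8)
The plan is to adapt the standard Brézis--Lieb argument for nonlocal quadratic forms to the Yukawa kernel $\frac{e^{-m|\cdot|}}{|\cdot|}$. Writing $v_n := u_n - u_\infty$, so that $v_n \rightharpoonup 0$ in $\Sobolev^1$ and $v_n \to 0$ a.e., I would first expand the bilinear form: for each fixed $n$,
\[
\D_m(u_n, u_n) - \D_m(v_n, v_n)
=
\D_m(u_\infty, u_\infty)
+ 2 \D_m(v_n, u_\infty)
+ 2\bigl[\D_m(v_n, v_n u_\infty / v_n \text{-correction terms}\bigr],
\]
more precisely, using $u_n^2 = v_n^2 + 2 v_n u_\infty + u_\infty^2$, the difference equals
\[
\D_m(u_\infty, u_\infty)
+ 4 \int\!\!\int \frac{e^{-m|x-y|}}{|x-y|} \bigl(v_n u_\infty\bigr)(x)\, u_\infty(y)^2 \dif x \ddif y
+ \text{(three more cross terms in } v_n u_\infty \text{ and } v_n^2).
\]
So it suffices to show that every cross term — each one being an integral of $\frac{e^{-m|x-y|}}{|x-y|}$ against a product of factors at least one of which is $v_n$ or $v_n^2$ — tends to $0$ as $n \to \infty$.

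The mechanism for this is weak continuity of the relevant linear functionals. By Lemma \ref{prelim:lem:L^4_estimate}, the Yukawa kernel is an $\Lebesgue^1$ function, so Young's inequality shows that convolution with $\frac{e^{-m|\cdot|}}{|\cdot|}$ maps $\Lebesgue^2 \to \Lebesgue^2$ boundedly; combined with the Sobolev embedding $\Sobolev^1 \hookrightarrow \Lebesgue^4$ (hence $v_n^2$ bounded in $\Lebesgue^2$), each of the three functions $w \mapsto \frac{e^{-m|\cdot|}}{|\cdot|} \ast (u_\infty^2)$, $\frac{e^{-m|\cdot|}}{|\cdot|} \ast (v_n u_\infty)$, $\frac{e^{-m|\cdot|}}{|\cdot|} \ast (v_n^2)$ lies in a bounded subset of $\Lebesgue^2$. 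The key point is then that $v_n \rightharpoonup 0$ in $\Sobolev^1$ together with $v_n \to 0$ a.e. forces, say, $v_n u_\infty \rightharpoonup 0$ and $v_n^2 \rightharpoonup 0$ in $\Lebesgue^2$ (boundedness in $\Lebesgue^2$ plus a.e. convergence to $0$ gives weak-$\Lebesgue^2$ convergence to $0$ along the whole sequence, by a routine argument: test against a bounded function supported on a large ball, split the ball by Egorov, handle the tail by uniform $\Lebesgue^2$ bounds). Pairing a weakly-null $\Lebesgue^2$ sequence against a strongly convergent (or merely bounded) $\Lebesgue^2$ sequence via the self-adjoint bounded operator "convolution with the kernel" then kills each cross term.

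The only genuinely non-routine step is handling the term that is quadratic in $v_n$, namely $\D_m(v_n, v_n) \to 0$ is \emph{not} what we need; rather we need the mixed term $\int\!\!\int \frac{e^{-m|x-y|}}{|x-y|} v_n(x)^2 u_\infty(y)^2 \dif x \ddif y \to 0$ and the genuinely cubic-in-$v_n$ leftover pieces like $\int\!\!\int \frac{e^{-m|x-y|}}{|x-y|} v_n(x)^2 (v_n u_\infty)(y) \dif x \ddif y$. For these I would write them as $\langle \text{(kernel)} \ast v_n^2, \, u_\infty^2 \rangle_{\Lebesgue^2}$ or $\langle \text{(kernel)} \ast (v_n^2), \, v_n u_\infty \rangle_{\Lebesgue^2}$ and use that the \emph{first} factor is weakly null in $\Lebesgue^2$ (since $v_n^2 \rightharpoonup 0$ in $\Lebesgue^2$ and convolution with the kernel is a bounded operator, hence weak-to-weak continuous) while the second factor converges strongly in $\Lebesgue^2$ — $u_\infty^2$ is fixed, and $v_n u_\infty \to 0$ strongly in $\Lebesgue^2$ because $|v_n u_\infty| \le C |u_\infty|$ with $u_\infty \in \Lebesgue^4 \subset$ a.e.-dominating envelope and $v_n \to 0$ a.e., so dominated convergence applies after noting $v_n$ is bounded in $\Lebesgue^\infty$ is false — instead use $v_n u_\infty \to 0$ a.e. and $|v_n u_\infty|^2 \le C |u_n u_\infty|^2 + C|u_\infty|^4$ is not dominated either, so the cleanest route is: $v_n u_\infty \rightharpoonup 0$ in $\Lebesgue^2$ (weak) suffices here since it is paired against the \emph{strongly} convergent factor; symmetrically, when $v_n u_\infty$ appears paired against a weakly-null factor we are also fine. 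In every case one of the two $\Lebesgue^2$ factors is weakly null and the other is bounded-and-relatively-compact-enough, and the bounded self-adjoint convolution operator transfers the weak-null property across the pairing. Collecting all terms, the difference converges to $\D_m(u_\infty, u_\infty)$, which is the claim. The main obstacle is thus purely bookkeeping: enumerating the $O(1)$ cross terms correctly and, for each, identifying which factor carries the weak-null decay and which carries compactness so that Young's inequality plus weak-strong pairing closes the estimate.
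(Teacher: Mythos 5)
Your argument is, in outline, the standard Brézis--Lieb expansion that the paper itself does not write out (it only points to the proofs of Zhao--Zhao and of Boer et al.), so you are on the same track as the intended proof: set $v_n = u_n - u_\infty$, expand $u_n^2 = v_n^2 + 2 v_n u_\infty + u_\infty^2$ bilinearly in the form $B\parens{f, g} := \int \int \frac{e^{-m\abs{x-y}}}{\abs{x-y}} f\parens{x} g\parens{y} \dif x \ddif y$, and kill each cross term by pairing a weakly null factor against a strongly convergent one. The two intermediate claims you need are correct and provable as you indicate: $v_n u_\infty \to 0$ \emph{strongly} (local compactness via Rellich plus a tail/truncation estimate on $u_\infty$) and $v_n^2 \rightharpoonup 0$ \emph{weakly} (boundedness plus a.e.\ convergence). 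Your parenthetical suggestion that weak-null against ``merely bounded'' suffices is false in general, but it is harmless here because in every cross term one factor really is strongly convergent.

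The genuine gap is the case $m = 0$, which the hypothesis $0 \leq m < \infty$ explicitly includes and which is the case the paper actually uses (the decomposition $\K_{a,b} = \frac{4}{3}\D_b - \frac{1}{3}\D_a - \D_0$ always involves $\D_0$). Your functional framework rests on the kernel being in $\Lebesgue^1$ so that convolution with it is bounded on $\Lebesgue^2$ by Young's inequality; for $m = 0$ the Coulomb kernel $\abs{\cdot}^{-1}$ is not integrable on $\real^3$ and convolution with it does \emph{not} map $\Lebesgue^2$ to $\Lebesgue^2$, so every one of your pairings collapses in exactly the case that matters. The repair is routine but must be made explicit: replace Young by the Hardy--Littlewood--Sobolev inequality (Lemma \ref{prelim:lem:HLS}), i.e.\ use $\abs{B\parens{f,g}} \leq C \norm{f}_{\Lebesgue^{6/5}} \norm{g}_{\Lebesgue^{6/5}}$ and regard convolution with the kernel as a bounded map $\Lebesgue^{6/5} \to \Lebesgue^{6}$ (the pointwise bound $e^{-m\abs{x}}/\abs{x} \leq \abs{x}^{-1}$ makes this uniform in $m$). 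Since $\Sobolev^1 \hookrightarrow \Lebesgue^{12/5}$, the products $v_n^2$, $v_n u_\infty$, $u_\infty^2$ are bounded in $\Lebesgue^{6/5}$; one then checks $v_n u_\infty \to 0$ strongly and $v_n^2 \rightharpoonup 0$ weakly in $\Lebesgue^{6/5}$, and the same weak--strong pairing in the $\Lebesgue^{6/5}$--$\Lebesgue^{6}$ duality closes every cross term for all $0 \leq m < \infty$ at once.
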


\section{Ground states of the autonomous problem}
\label{autonomous}

We associate each $u \in \Sobolev^1$ and $\theta > 0$ with the rescaling
\[u_\theta \parens{x} := \theta^3 u \parens{\theta^2 x}.\]
A change of variable shows that
\begin{equation}
\label{aut:eqn:rescaling}
\norm{u_\theta}_{\Lebesgue^2} = \norm{u}_{\Lebesgue^2},
\quad
\A \parens{u_\theta}
=
\theta^4 \A \parens{u}
\quad \text{and} \quad
\K_{a, b} \parens{u_\theta}
=
\theta^2
\K_{\theta^{- 2} a, \theta^{- 2} b} \parens{u}.
\end{equation}
In view of these identities, the following result is a consequence of Lemma \ref{prelim:lem:L^4_estimate}.

\begin{lem}
\label{aut:lem:negative_K}
Suppose that $0 < b \leq \infty$ and
$u \in \Sobolev^1 \setminus \set{0}$.
\begin{enumerate}
\item
If $0 < a \leq \infty$, then
$
\frac{\K_{a, b} \parens{u_\theta}}{\theta^2}
\xrightarrow[\theta \to 0]{}
-
\int \int
	\frac{u \parens{x}^2 u \parens{y}^2}{\abs{x - y}}
\dif x \ddif y
<
0
$.
\item
If $a = 0$, then
$
\frac{\K_{0, b} \parens{u_\theta}}{\theta^2}
\xrightarrow[\theta \to 0]{}
- \frac{4}{3}
\int \int
	\frac{u \parens{x}^2 u \parens{y}^2}{\abs{x - y}}
\dif x \ddif y
<
0.
$
\end{enumerate}
\end{lem}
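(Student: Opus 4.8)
The plan is to reduce the statement to the rescaling identities in \eqref{aut:eqn:rescaling} together with the decay of $\D_c$ as $c \to \infty$ furnished by Lemma \ref{prelim:lem:L^4_estimate}. Since $\K_{a, b} = \frac{4}{3} \D_b - \frac{1}{3} \D_a - \D_0$, the third identity in \eqref{aut:eqn:rescaling} yields
\[
\frac{\K_{a, b} \parens{u_\theta}}{\theta^2}
=
\K_{\theta^{- 2} a, \theta^{- 2} b} \parens{u}
=
\frac{4}{3} \D_{\theta^{- 2} b} \parens{u}
-
\frac{1}{3} \D_{\theta^{- 2} a} \parens{u}
-
\D_0 \parens{u}
\]
for every $\theta > 0$, so it suffices to track each $\D$-term as $\theta \to 0$.

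First I would note that, since $u \in \Sobolev^1$, the Sobolev embedding in dimension $3$ (or Lemma \ref{prelim:lem:GN}) gives $u \in \Lebesgue^4$, so Lemma \ref{prelim:lem:L^4_estimate} applies and yields $\D_c \parens{u} \leq \frac{4 \pi}{c^2} \norm{u}_{\Lebesgue^4}^4 \to 0$ as $c \to \infty$. Since $0 < b \leq \infty$, we have $\theta^{- 2} b \to \infty$ as $\theta \to 0$, whence $\D_{\theta^{- 2} b} \parens{u} \to 0$. For item (1): if $a < \infty$ the same argument gives $\D_{\theta^{- 2} a} \parens{u} \to 0$, while if $a = \infty$ then $\D_a \equiv 0$ by the convention $e^{- \infty \abs{\cdot}} \equiv 0$; in either case the $\D_a$-term vanishes in the limit and the displayed identity gives $\frac{\K_{a, b} \parens{u_\theta}}{\theta^2} \to - \D_0 \parens{u}$. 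For item (2): $a = 0$ forces $\D_{\theta^{- 2} a} \parens{u} = \D_0 \parens{u}$ for every $\theta$, so the right-hand side above equals $\frac{4}{3} \D_{\theta^{- 2} b} \parens{u} - \frac{4}{3} \D_0 \parens{u} \to - \frac{4}{3} \D_0 \parens{u}$.

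To conclude I would check that $\D_0 \parens{u} = \int \int \abs{x - y}^{- 1} u \parens{x}^2 u \parens{y}^2 \dif x \ddif y$ is finite and strictly positive: finiteness follows from Lemma \ref{prelim:lem:HLS} applied with $f = g = u^2 \in \Lebesgue^{\frac{6}{5}}$ (equivalently $u \in \Lebesgue^{\frac{12}{5}}$, again by Sobolev), and strict positivity holds because the kernel $\abs{x - y}^{- 1}$ is positive and $u \not\equiv 0$ forces $u^2 \not\equiv 0$. This gives the asserted strict negativity of both limits. I do not expect a genuine obstacle here; the only points deserving a word of care are the degenerate value $a = \infty$, handled through the convention $e^{- \infty \abs{\cdot}} \equiv 0$, and making the Sobolev embeddings explicit before invoking Lemmas \ref{prelim:lem:L^4_estimate} and \ref{prelim:lem:HLS}.
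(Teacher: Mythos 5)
Your proposal is correct and follows exactly the route the paper intends: the paper states the lemma as an immediate consequence of the rescaling identities \eqref{aut:eqn:rescaling} and Lemma \ref{prelim:lem:L^4_estimate}, and your argument is precisely the filled-in version of that, including the correct handling of the conventions at $a = 0$ and $a = \infty$ and the verification that $\D_0 \parens{u}$ is finite and strictly positive.
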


Next, we prove that the ground state energy is negative when
$b > 0$.

\begin{lem}
\label{aut:lem:negative_energy}
If $0 \leq a \leq \infty$, $0 < b \leq \infty$ and
$\mu > 0$, then $E_{a, b}^0 \parens{\mu} < 0$.
\end{lem}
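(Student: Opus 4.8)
The plan is to test the energy functional $\E_{a, b}^0$ on a suitable rescaling of an arbitrary competitor and let the scaling parameter degenerate. Concretely, I would fix some $u \in \Sphere \parens{\mu}$ — which is nonzero since $\mu > 0$ — and consider $u_\theta$ for $\theta > 0$. By \eqref{aut:eqn:rescaling} we have $u_\theta \in \Sphere \parens{\mu}$, and
\[
\E_{a, b}^0 \parens{u_\theta}
=
\frac{1}{2} \theta^4 \A \parens{u}
+
\frac{1}{4} \K_{a, b} \parens{u_\theta}
=
\theta^2 \parens*{
	\frac{1}{2} \theta^2 \A \parens{u}
	+
	\frac{1}{4} \cdot \frac{\K_{a, b} \parens{u_\theta}}{\theta^2}
}.
\]

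Next I would invoke Lemma \ref{aut:lem:negative_K}: regardless of whether $0 < a \leq \infty$ or $a = 0$, the ratio $\K_{a, b} \parens{u_\theta} / \theta^2$ converges as $\theta \to 0$ to a strictly negative number $L$ (namely $- \D_0 \parens{u}$ or $- \frac{4}{3} \D_0 \parens{u}$, but only its sign will matter). Consequently the bracketed factor above tends to $\frac{1}{4} L < 0$ as $\theta \to 0$, hence it is negative for all sufficiently small $\theta$; choosing one such value $\theta = \theta_0$ gives $\E_{a, b}^0 \parens{u_{\theta_0}} < 0$, and since $u_{\theta_0} \in \Sphere \parens{\mu}$ we conclude $E_{a, b}^0 \parens{\mu} \leq \E_{a, b}^0 \parens{u_{\theta_0}} < 0$.

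I do not expect any real obstacle here: this is the routine ``the $\theta^2$-term dominates the $\theta^4$-term'' scaling argument, and all the genuine content — that the $O \parens{\theta^2}$ coefficient of $\K_{a, b} \parens{u_\theta}$ is governed by the Newtonian tail of $K_{a, b}$ and is therefore attractive — has already been extracted in Lemma \ref{aut:lem:negative_K}. The only mild point of care is to treat the cases $a > 0$ and $a = 0$ simultaneously, but both yield a strictly negative limit $L$, so the conclusion is uniform in $a$ once $0 < b \leq \infty$ is assumed.
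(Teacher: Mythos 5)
Your argument is correct and is essentially identical to the paper's: both fix $u \in \Sphere\parens{\mu}$, pass to the rescalings $u_\theta$, divide the energy by $\theta^2$, and invoke Lemma \ref{aut:lem:negative_K} to see that the limit as $\theta \to 0$ is strictly negative, so that $\E_{a,b}^0\parens{u_\theta} < 0$ for small $\theta$. No gaps; your explicit handling of the two cases $a > 0$ and $a = 0$ merely spells out what the paper leaves implicit.
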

\begin{proof}
It suffices to prove that there exists $w \in \Sphere \parens{\mu}$ such that $\E_{a, b}^0 \parens{w} < 0$. Consider a fixed
$u \in \Sphere \parens{\mu}$. It follows from \eqref{aut:eqn:rescaling} and Lemma \ref{aut:lem:negative_K} that
\[
\lim_{\theta \to 0}
	\frac{\E_{a, b}^0 \parens{u_\theta}}{\theta^2}
=
\lim_{\theta \to 0}
\frac{\theta^2}{2}
\A \parens{u}
+
\frac{1}{4 \theta^2}
\K_{a, b} \parens{u_\theta}
<
0,
\]
hence the result.
\end{proof}

Our next] goal is to show that the Strict Subadditivity Condition is satisfied.

\begin{lem}
\label{aut:lem:SSC}
If $0 \leq a \leq \infty$ and $0 < b \leq \infty$, then \eqref{intro:eqn:SSC} is satisfied.
\end{lem}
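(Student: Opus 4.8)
The plan is to establish \eqref{intro:eqn:SSC} by exploiting the negativity of the ground state energy function established in Lemma \ref{aut:lem:negative_energy}, together with a scaling relation that controls how $E_{a,b}^0$ behaves under rescaling of the mass. First I would record the homogeneity behavior of the energy under the concentration rescaling $u \mapsto u_\theta$. Since $\norm{u_\theta}_{\Lebesgue^2} = \norm{u}_{\Lebesgue^2}$, rescaling alone does not change the mass, so to probe the mass-dependence I would instead use an amplitude rescaling: for $u \in \Sphere(\rho)$ and $t > 0$, the function $\sqrt{t}\,u$ lies in $\Sphere(t\rho)$, and $\A(\sqrt t u) = t\A(u)$, $\K_{a,b}(\sqrt t u) = t^2 \K_{a,b}(u)$. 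This already shows $E_{a,b}^0(t\mu) \leq t \cdot \frac{1}{2}\A(u) + t^2 \cdot \frac14 \K_{a,b}(u)$ for every admissible $u$, but the quadratic term has the wrong sign to conclude directly; the cleaner route is the classical one.

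The key step is the standard homogeneity-of-the-infimum argument à la Lions: I claim that $E_{a,b}^0(t\mu) < t\, E_{a,b}^0(\mu)$ whenever $t > 1$ and $\mu > 0$. To see this, fix $\mu > 0$ and pick (for any $\delta > 0$) some $u \in \Sphere(\mu)$ with $\E_{a,b}^0(u) < E_{a,b}^0(\mu) + \delta$; by Lemma \ref{aut:lem:negative_energy} we may additionally assume, after the concentration rescaling $u \mapsto u_\theta$ with $\theta$ small, that $\E_{a,b}^0(u) < 0$ — indeed the proof of Lemma \ref{aut:lem:negative_energy} produces, for each $u \in \Sphere(\mu)$, a rescaling $u_\theta \in \Sphere(\mu)$ with negative energy, and one can take $\delta$ small enough that the near-minimizer also has negative energy. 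Then, using $\sqrt t u \in \Sphere(t\mu)$,
\[
E_{a,b}^0(t\mu)
\leq
\E_{a,b}^0(\sqrt t\, u)
=
\frac{t}{2}\A(u) + \frac{t^2}{4}\K_{a,b}(u)
<
t\left(\frac12 \A(u) + \frac14 \K_{a,b}(u)\right)
=
t\,\E_{a,b}^0(u)
<
t\,E_{a,b}^0(\mu) + t\delta,
\]
where the strict inequality uses $t^2 > t$ together with $\K_{a,b}(u) < 0$ (valid since $0 < b \leq \infty$ forces $K_{a,b}$ to be negative on the relevant range, as recorded in Table \ref{intro:table:qualitative} — one uses $\K_{a,b}(u) = \frac43 \D_b(u) - \frac13 \D_a(u) - \D_0(u)$ and the sign analysis; alternatively one notes directly that the energy of the near-minimizer is negative, so $\A(u) < -\frac12\K_{a,b}(u)$, which again gives the strict drop). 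Letting $\delta \to 0$ yields $E_{a,b}^0(t\mu) \leq t\, E_{a,b}^0(\mu)$, and a short extra argument (re-running the estimate with a fixed near-minimizer and not sending $\delta\to 0$) upgrades this to the strict inequality $E_{a,b}^0(t\mu) < t\, E_{a,b}^0(\mu)$ for $t > 1$.

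Granting the strict scaling inequality, \eqref{intro:eqn:SSC} follows by the usual splitting: for $\mu > 0$ and $\rho \in \ooi{0,\mu}$, write $\mu = \rho + (\mu - \rho)$ and apply the scaling inequality with base mass $\rho$ and factor $\mu/\rho > 1$, and with base mass $\mu - \rho$ and factor $\mu/(\mu-\rho) > 1$, to get
\[
E_{a,b}^0(\mu)
=
\frac{\rho}{\mu} E_{a,b}^0(\mu) + \frac{\mu - \rho}{\mu} E_{a,b}^0(\mu)
<
\frac{\rho}{\mu}\cdot\frac{\mu}{\rho} E_{a,b}^0(\rho)
+
\frac{\mu-\rho}{\mu}\cdot\frac{\mu}{\mu-\rho} E_{a,b}^0(\mu-\rho)
=
E_{a,b}^0(\rho) + E_{a,b}^0(\mu - \rho).
\]
I expect the main obstacle to be the bookkeeping in the first step: one must be careful that the strict inequality genuinely survives the passage to the infimum, which is why I would fix a single near-minimizer with strictly negative energy (available thanks to Lemma \ref{aut:lem:negative_energy}) rather than take limits of $\delta$-minimizers, and then the gap $t^2 - t > 0$ times the strictly negative quantity $\K_{a,b}(u)$ gives a quantitative strict drop. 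The sign-changing geometry of $K_{a,b}$ for some parameter ranges does not interfere here, because for $0 < b \leq \infty$ and $0 \leq a \leq \infty$ the negativity of the energy of a suitable competitor is all that is used.
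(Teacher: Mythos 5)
Your proposal is correct and follows essentially the same route as the paper: both reduce \eqref{intro:eqn:SSC} to the strict scaling inequality $E^0_{a,b}(t\mu) < t\,E^0_{a,b}(\mu)$ for $t>1$ (equivalently, $\mu \mapsto E^0_{a,b}(\mu)/\mu$ strictly decreasing), both obtain it by evaluating the energy at $\sqrt{t}\,u$ for a (near-)minimizer $u$ and exploiting $t^2 - t > 0$ together with $\K_{a,b}(u) < 0$, and both derive the latter from the negativity of the ground-state energy (the paper works with a minimizing sequence and $\limsup_n \K_{a,b}(u_n) < 0$, you fix a single near-minimizer with negative energy; the bookkeeping is equivalent). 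One parenthetical justification you offer is wrong: $0 < b \leq \infty$ does \emph{not} force $K_{a,b} \leq 0$ pointwise (Table 1 records that $K_{a,b}$ is sign-changing when $0 < 4b < a$), but your fallback --- that $\E^0_{a,b}(u) < 0$ already gives $\K_{a,b}(u) \leq 4\,\E^0_{a,b}(u) < 0$ since $\A(u) \geq 0$ --- is exactly the paper's argument and covers every parameter range of the lemma, so the proof stands.
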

\begin{proof}
In view of Lemmas \ref{prelim:lem:uniform_estimate} and \ref{aut:lem:negative_energy}, we have a well-defined function
$
\ooi{0, \infty} \ni \mu
\mapsto
\frac{E_{a, b}^0 \parens{\mu}}{\mu} \in \ooi{- \infty, 0}
$.
It is classical that it suffices to prove that this function is strictly decreasing. Suppose that $\mu > 0$, $0 < \rho < \mu$ and let $t = \frac{\mu}{\rho} > 1$. Let
$\parens{u_n}_{n \in \nat}$ denote a minimizing sequence of
$\E_{a, b}^0|_{\Sphere \parens{\rho}}$. As
$E_{a, b}^0 \parens{\rho} < 0$, we deduce that
\begin{equation}
\label{aut:lem:SSC:1}
\limsup_{n \to \infty} \K_{a, b} \parens{u_n} < 0.
\end{equation}
By definition,
\[
E_{a, b}^0 \parens{\mu}
\leq
\E_{a, b}^0 \parens*{\sqrt{t} u_n}
-
t \E_{a, b}^0 \parens{u_n}
+
t \E_{a, b}^0 \parens{u_n}
=
\frac{t^2 - t}{4} \K_{a, b} \parens{u_n}
+
t \E_{a, b}^0 \parens{u_n}.
\]
In view of \eqref{aut:lem:SSC:1}, we obtain
\[
E_{a, b}^0 \parens{\mu}
\leq
\frac{t^2 - t}{4}
\parens*{
	\limsup_{m \to \infty}
	\K \parens{u_m}
}
+
t E_{a, b}^0 \parens{\rho}
<
t E_{a, b}^0 \parens{\rho}.
\]
\end{proof}

We proceed to the proof of the theorem.

\begin{proof}[Proof of Theorem \ref{intro:thm:autonomous}]
~\paragraph{Case $0 \leq a \leq \infty$ and $b = 0$: nonexistence of ground states.}

It is clear that $\E_{a, 0}^0 \parens{u} > 0$ for every
$u \in \Sphere \parens{\mu}$. As such, it suffices to prove that
$E_{a, 0}^0 \parens{\mu} = 0$. Consider a fixed
$u \in \Sphere \parens{\mu}$. It follows from \eqref{aut:eqn:rescaling} that
\[
0
\leq
\E_{a, 0}^0 \parens{u_\theta}
\leq
\frac{\theta^4}{2}
\A \parens{u}
+
\frac{\theta^2}{4}
\K_{\infty, 0} \parens{u}
\xrightarrow[\theta \to 0^+]{}
0,
\]
hence the result.

\paragraph{Case $0 \leq a \leq \infty$ and $b > 0$: existence of ground state.}

Let $\parens{v_n}_{n \in \nat}$ denote a minimizing sequence of
$\E_{a, b}^0|_{\Sphere \parens{\mu}}$.
It follows from Lemma \ref{prelim:lem:uniform_estimate} that
$\E_{a, b}^0|_{\Sphere \parens{\mu}}$ is coercive, so
$\parens{v_n}_{n \in \nat}$ is bounded in $\Sobolev^1$. Due to Lemma \ref{aut:lem:negative_energy},
$E_{a, b}^0 \parens{\mu} < 0$.
As such, a standard argument involving concentration-compactness shows that there exists $\set{x_n}_{n \in \nat} \subset \real^3$ such that, up to subsequence,
$\parens{u_n := v_n \parens{\cdot - x_n}}_{n \in \nat}$
does not admit subsequences that converge weakly to zero in
$\Sobolev^1$ (for details, see \cite[p. 275--6]{bellazziniStableStandingWaves2011}).

In particular, there exists
$u_\infty \in \Sobolev^1 \setminus \set{0}$
such that, up to subsequence, $u_n \rightharpoonup u_\infty$ in
$\Sobolev^1$ as $n \to \infty$. If there exists $n_0 \in \nat$ such that $u_n = u_\infty$ for every $n \geq n_0$, then there is nothing to prove. As such, we suppose that, up to subsequence, $u_n \neq u_\infty$ for every $n \in \nat$. In view of the Kondrakov theorem, it also holds that, up to subsequence,
$u_n \to u_\infty$ a.e. as $n \to \infty$.

The norm
$\|\cdot\|_{\Lebesgue^2}$ is weakly lower semicontinuous and $u_\infty \not \equiv 0$, so
$0 < \rho := \norm{u_\infty}_{\Lebesgue^2}^2 \leq \mu$.
We want to show that
\begin{equation}
\label{aut:eqn:0}
\rho = \mu.
\end{equation}
By contradiction, suppose that $\rho < \mu$. Clearly,
\begin{equation}
\label{aut:eqn:1.0}
\E_{a, b}^0 \parens{u_n}
=
\E^0_{a, b} \parens{u_\infty}
+
\E_{a, b}^0 \parens{u_n - u_\infty}
+
\eqref{aut:eqn:1.2}
+
\eqref{aut:eqn:1.3},
\end{equation}
where
\begin{equation}
\label{aut:eqn:1.2}
\frac{1}{2}
\parens*{
	\A \parens{u_n}
	-
	\A \parens{u_\infty}
	-
	\A \parens{u_n - u_\infty}
}
\end{equation}
and
\begin{equation}
\label{aut:eqn:1.3}
\frac{1}{4}
\parens*{
	\K_{a, b} \parens{u_n}
	-
	\K_{a, b} \parens{u_\infty}
	-
	\K_{a, b} \parens{u_n - u_\infty}
}.
\end{equation}
On one hand, the term \eqref{aut:eqn:1.2} tends to zero as
$n \to \infty$ because $u_n \rightharpoonup u_\infty$ in
$\Sobolev^1$ as $n \to \infty$. On the other hand, \eqref{aut:eqn:1.3} tends to zero as $n \to \infty$ due to Lemma \ref{prelim:lem:BL}. We deduce that
\begin{equation}
\label{aut:eqn:2}
\E^0_{a, b} \parens{u_n}
-
\E^0_{a, b} \parens{u_\infty}
-
\E^0_{a, b} \parens{u_n - u_\infty}
\xrightarrow[n \to \infty]{}
0.
\end{equation}
Due to the weak convergence $u_n \rightharpoonup u_\infty$ in
$\Sobolev^1$ as $n \to \infty$, we deduce that
$\norm{u_n - u_\infty}_{\Lebesgue^2} \to \mu - \rho$
as $n \to \infty$. It follows that
\begin{equation}
\label{aut:eqn:3}
\E^0_{a, b} \parens{u_n - u_\infty}
-
\E^0_{a, b}
\parens*{
	\sqrt{\frac{\mu - \rho}{\delta_n}} \parens{u_n - u_\infty}
}
\xrightarrow[n \to \infty]{}
0,
\end{equation}
where
$\delta_n := \norm{u_n - u_\infty}_{\Lebesgue^2}^2$.
In view of \eqref{aut:eqn:2} and \eqref{aut:eqn:3}, we obtain
\[
E_{a, b}^0 \parens{\mu}
\geq
E_{a, b}^0 \parens{\rho}
+
E_{a, b}^0 \parens{\mu - \rho}.
\]
This inequality contradicts Lemma \ref{aut:lem:SSC}, so \eqref{aut:eqn:0} is satisfied.

We just proved that $u_\infty \in \Sphere \parens{\mu}$, so we only have to show that $u_\infty$ is a minimizer of
$\E_{a, b}^0|_{\Sphere \parens{\mu}}$. As $\rho = \mu$, it follows from \eqref{aut:eqn:3} that
$\E^0_{a, b} \parens{u_n - u_\infty} \to 0$
as $n \to \infty$. Finally, the result is a consequence of \eqref{aut:eqn:2}.

\paragraph{Sign of the Lagrange multiplier.}
On one hand,
$\K_{a, b} \parens{u} < 0$ and $\E_{a, b}^0 \parens{u} < 0$
because
$E_{a, b}^0 \parens{\mu} = \E_{a, b}^0 \parens{u} < 0$.
On the other hand, the Nehari identity shows that
\[
\omega \mu
=
\A \parens{u} + \K_{a, b} \parens{u}
=
2 \E_{a, b}^0 \parens{u}
+
\frac{1}{2} \K_{a, b} \parens{u},
\]
hence the result.
\end{proof}

\section{Ground states of the nonautonomous problem}
\label{nonautonomous}
\subsection{Proof of Proposition \ref{intro:prop:energy_deficiency}}
\paragraph{Proof of the first conclusion.}
Let $u$ denote a ground state of \eqref{intro:eqn:autonomous}. We suppose that $0 \not \equiv V \leq 0$ a.e., so
\[
E_{a, b}^V \parens{\mu}
\leq
\E_{a, b}^V \parens{u}
=
\E_{a, b}^0 \parens{u}
+
\frac{1}{2} \V \parens{u}
=
E_{a, b}^0 \parens{\mu}
+
\underbrace{\frac{1}{2} \V \parens{u}}_{< 0}
<
E_{a, b}^0 \parens{\mu},
\]
hence the result.

\paragraph{Proof of the second conclusion.}
Let $\parens{u_n}_{n \in \nat}$ denote a minimizing sequence of
$\E_{a, b}^{V_1}|_{\Sphere \parens{\mu}}$. Clearly,
\begin{align*}
E_{a, b}^{V_1 + V_2 + V_3} \parens{\mu}
&\leq
\E^{V_1}_{a, b} \parens{u_n}
+
\frac{1}{2}
\int
	\parens*{V_2 \parens{x} + V_3 \parens{x}} u_n \parens{x}^2
\dif x
\\
&\leq
\E^{V_1}_{a, b} \parens{u_n}
+
\frac{1}{2}
\norm{V_2}_{\Lebesgue^{\frac{3}{2}}}
\norm{u_n}_{\Lebesgue^6}^2
+
\frac{1}{2}
\norm{V_3}_{\Lebesgue^\infty}
\mu.
\end{align*}
On one hand, it follows from Lemma \ref{prelim:lem:uniform_estimate} that
$\parens{u_n}_{n \in \nat}$ is bounded in $\Sobolev^1$.
On the other hand, $\Sobolev^1 \hookrightarrow \Lebesgue^6$. As such, we deduce that
$
K := \sup_{n \in \nat} \frac{1}{2} \norm{u_n}_{\Lebesgue^6}^2
<
\infty
$.
Therefore,
\begin{align*}
E_{a, b}^{V_1 + V_2 + V_3} \parens{\mu}
&\leq
\E^{V_1}_{a, b} \parens{u_n}
+
K \norm{V_2}_{\Lebesgue^{\frac{3}{2}}}
+
\frac{1}{2}
\norm{V_3}_{\Lebesgue^\infty}
\mu.
\end{align*}
In view of Lemma \ref{prelim:lem:uniform_estimate}, it suffices to take limits to conclude.

\qed

\subsection{Proof of Theorem \ref{intro:thm:nonautonomous}}

The case $a = b = 0$ follows from the classical result \cite[Theorem 11.5]{liebAnalysis2001}.

\paragraph{Existence of ground state.}

Suppose that $\parens{u_n}_{n \in \nat}$ is a minimizing sequence of $\E_{a, b}^V|_{\Sphere \parens{\mu}}$. It suffices to argue as in the proof of Theorem \ref{intro:thm:autonomous} to show that there exists $u_\infty \in \Sobolev^1$ such that, up to subsequence,
$u_n \rightharpoonup u_\infty$ in $\Sobolev^1$ as $n \to \infty$. The norm $\|\cdot\|_{\Lebesgue^2}$ is weakly lower semicontinuous and $u_n \rightharpoonup u_\infty$ in
$\Sobolev^1$ as $n \to \infty$, so
$\norm{u_\infty}_{\Lebesgue^2}^2 \leq \mu$. Let us prove that vanishing does not occur, i.e.,
\begin{equation}
\label{nonaut:eqn:2}
u_\infty \not \equiv 0.
\end{equation}
By contradiction, suppose that $u_\infty \equiv 0$. It follows from Lemma \ref{prelim:lem:V_is_weakly_continuous} that
\[
E_{a, b}^V \parens{\mu}
=
\lim_{n \to \infty} \E_{a, b}^V \parens{u_n}
=
\lim_{n \to \infty} \E_{a, b}^0 \parens{u_n}
\geq
E_{a, b}^0 \parens{\mu}.
\]
This inequality contradicts \ref{intro:hyp:energy_deficiency}, so \eqref{nonaut:eqn:2} holds.

We just proved that $u_\infty \not \equiv 0$, so
$0 < \rho := \norm{u_\infty}_{\Lebesgue^2}^2 \leq \mu$.
If there exists $n_0 \in \nat$ such that $u_n = u_\infty$ for every $n \geq n_0$, then there is nothing to prove. As such, suppose that
$\parens{u_n}_{n \in \nat}$ has a subsequence such that
$u_n \neq u_\infty$ for every $n \in \nat$.
In view of the Kondrakov theorem, it also holds that, up to subsequence,
$u_n \to u_\infty$ a.e. as $n \to \infty$.

By definition,
\begin{align}
\E_{a, b} \parens{u_n}
=
&
\E_{a, b} \parens{u_\infty}
\label{proof:2.1}
\\
&+
\E_{a, b}^0 \parens{u_n - u_\infty}
\label{proof:2.2}
\\
&+
\frac{1}{2}
\parens*{
	\A \parens{u_n}
	-
	\A \parens{u_\infty}
	-
	\A \parens{u_n - u_\infty}
}
\label{proof:2.3}
\\
&+
\frac{1}{2}
\parens*{\V \parens{u_n} - \V \parens{u_\infty}}
\label{proof:2.4}
\\
&+
\frac{1}{4}
\parens*{
	\K_{a, b} \parens{u_n}
	-
	\K_{a, b} \parens{u_\infty}
	-
	\K_{a, b} \parens{u_n - u_\infty}
}.
\label{proof:2.5}
\end{align}
First, consider the term \eqref{proof:2.1}. As $K_{a, b} < 0$, we obtain
\begin{align}
E_{a, b} \parens{\mu}
\leq
\E_{a, b} \parens*{
	\sqrt{\frac{\mu}{\rho}} u_\infty
}
&=
\frac{\mu}{2 \rho}
\A \parens{u_\infty}
+
\frac{\mu}{2 \rho}
\V \parens{u_\infty}
+
\frac{\mu^2}{4 \rho^2}
\K_{a, b} \parens{u_\infty}
\nonumber
\\
&\leq
\frac{\mu}{\rho}
\E_{a, b} \parens{u_\infty}.
\label{nonaut:eqn:3}
\end{align}
Now, consider the term \eqref{proof:2.2}. Let
$\delta_n := \norm{u_n - u_\infty}_{\Lebesgue^2}^2$,
so that $\delta_n \to \mu - \rho < \mu$ as $n \to \infty$.
As argued for the previous term,
\begin{align*}
E_{a, b}^0 \parens{\mu}
\leq
\E_{a, b}^0 \parens*{
	\sqrt{\frac{\mu}{\delta_n}} \parens{u_n - u_\infty}
}
&=
\frac{\mu}{2 \delta_n}
\A \parens{u_n - u_\infty}
+
\frac{\mu^2}{4 \delta_n^2}
\K_{a, b} \parens{u_n - u_\infty}
\\
&\leq
\frac{\mu}{\delta_n}
\E_{a, b}^0 \parens{u_n - u_\infty}
\end{align*}
for sufficiently large $n \in \nat$. That is,
\begin{equation}
\label{nonaut:eqn:4}
\E_{a, b}^0 \parens{u_n - u_\infty}
\geq
\frac{\delta_n}{\mu} E_{a, b}^0 \parens{\mu}.
\end{equation}
Let us show that the terms \eqref{proof:2.3}--\eqref{proof:2.5} tend to zero as $n \to \infty$. Indeed, \eqref{proof:2.3} tends to zero as $n \to \infty$ because $u_n \rightharpoonup u_\infty$ in $\Sobolev^1$ as
$n \to \infty$. The term \eqref{proof:2.4} tends to zero as
$n \to \infty$ due to Lemma \ref{prelim:lem:V_is_weakly_continuous}. The term \eqref{proof:2.5} also tends to zero as $n \to \infty$ due to Lemma \ref{prelim:lem:BL}. In view of \eqref{nonaut:eqn:3}, \eqref{nonaut:eqn:4} and this discussion, we obtain
\begin{equation}
\label{nonaut:eqn:5}
\E_{a, b} \parens{u_n}
\geq
\E_{a, b} \parens{u_\infty}
+
\frac{\delta_n}{\mu} E_{a, b}^0 \parens{\mu}
+
o_n \parens{1}
\geq
\frac{\rho}{\mu} E_{a, b} \parens{\mu}
+
\frac{\delta_n}{\mu} E_{a, b}^0 \parens{\mu}
+
o_n \parens{1}.
\end{equation}

Our next goal is to prove that
\begin{equation}
\label{nonaut:eqn:6}
\norm{u_\infty}_{\Lebesgue^2}^2 = \mu.
\end{equation}
By contradiction, suppose that $\rho < \mu$. It follows from \eqref{nonaut:eqn:5} that
$E_{a, b} \parens{\mu} \geq E_{a, b}^0 \parens{\mu}$.
This contradicts \ref{intro:hyp:energy_deficiency}, so \eqref{nonaut:eqn:6} holds.

We just proved that $u_\infty \in \Sphere \parens{\mu}$, so we only have to show that $u_\infty$ is a minimizer of $\E_{a, b}|_{\Sphere \parens{\mu}}$. As $\rho = \mu$, it follows that $\delta_n \to 0$ as
$n \to \infty$. Therefore, we have
$\E_{a, b} \parens{u_\infty} \leq E_{a, b} \parens{\mu}$
due to \eqref{nonaut:eqn:5}. Finally, the result follows from the definition of $E_{a, b} \parens{\mu}$.

\paragraph{Sign of the Lagrange multiplier.}
It suffices to argue as in the proof of Theorem \ref{intro:thm:autonomous}.
\qed

\section{Asymptotic behavior of ground states}
\label{asy}
\subsection{Case 1: $\parens{a_n, b_n} \to \parens{0, 0}$ as $n \to \infty$}

Consider the function defined as
$f \parens{t} = \frac{1 - e^{- c t}}{t}$
for every $t > 0$, where $c$ denotes a fixed positive number. The following estimate is a consequence of the fact that $f$ is strictly decreasing and $f \parens{0^+} = c$.
\begin{lem}
\label{asy-2:lem:L^2_estimate}
It holds that
\[
\int \int
	\frac{1 - e^{- c \abs{x - y}}}{\abs{x - y}}
	u \parens{x}^2
	u \parens{y}^2
\dif x \ddif y
\leq
c \norm{u}_{\Lebesgue^2}^4
\]
for every $c \in \ooi{0, \infty}$ and $u \in \Lebesgue^2$.
\end{lem}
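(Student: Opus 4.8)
The plan is to reduce the asserted double-integral bound to a pointwise estimate on the kernel. Writing $f\parens{r} = \frac{1 - e^{-cr}}{r}$ for $r > 0$ (with $c > 0$ fixed), the whole point is that the kernel $\frac{1 - e^{-c\abs{x-y}}}{\abs{x-y}}$ equals $f\parens{\abs{x-y}}$ and is therefore bounded above by $\sup_{r > 0} f\parens{r}$. First I would record the two elementary properties of $f$ mentioned before the statement: $f\parens{0^+} = c$, which is immediate from $1 - e^{-cr} = cr + O\parens{r^2}$ as $r \to 0^+$ (or by l'Hôpital), and the fact that $f$ is strictly decreasing on $\ooi{0, \infty}$. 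For the monotonicity I would compute
\[
f'\parens{r}
=
\frac{cr e^{-cr} - 1 + e^{-cr}}{r^2}
\]
and observe that the numerator $\phi\parens{r} := cr e^{-cr} - 1 + e^{-cr}$ satisfies $\phi\parens{0} = 0$ and $\phi'\parens{r} = -c^2 r e^{-cr} < 0$ for $r > 0$, so $\phi < 0$ and hence $f' < 0$ on $\ooi{0, \infty}$. Combining the two facts gives $f\parens{r} < c$ for every $r > 0$, i.e. $\frac{1 - e^{-c\abs{x-y}}}{\abs{x-y}} \leq c$ for a.e. $\parens{x, y} \in \real^3 \times \real^3$.

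With this pointwise bound in hand, I would multiply through by $u\parens{x}^2 u\parens{y}^2 \geq 0$ and integrate. Since $u \in \Lebesgue^2$ forces $u^2 \in \Lebesgue^1$, Tonelli's theorem makes the double integral well defined in $\coi{0, \infty}$ and legitimizes the estimate
\[
\int \int
	\frac{1 - e^{-c\abs{x-y}}}{\abs{x-y}}
	u\parens{x}^2 u\parens{y}^2
\dif x \ddif y
\leq
c \int \int u\parens{x}^2 u\parens{y}^2 \dif x \ddif y
=
c \parens*{\int u\parens{x}^2 \dif x}^2
=
c \norm{u}_{\Lebesgue^2}^4,
\]
which is exactly the claim.

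I do not expect a genuine obstacle here: the statement already presupposes the calculus facts about $f$, so the only care needed is the short verification that $f$ is decreasing with $f\parens{0^+} = c$ and the routine measure-theoretic bookkeeping (nonnegativity of the integrand, Tonelli) that turns the pointwise kernel bound into the integral inequality. Both of these I would keep to a couple of lines.
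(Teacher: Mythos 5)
Your argument is correct and is exactly the paper's: the lemma is stated as a consequence of the kernel $f\parens{r} = \frac{1 - e^{-cr}}{r}$ being strictly decreasing with $f\parens{0^+} = c$, and you have simply filled in the routine verification of these two facts plus the Tonelli step. No issues.
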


\begin{proof}[Proof of Theorem \ref{intro:thm:asy-1}]
For the sake of simplicity, we omit the dependence on $V$. It is worth introducing a decomposition of $K_{a_n, b_n}$. Consider the functions
$K_{a_n, b_n}^\pm \colon \real^3 \setminus \set{0} \to \real$
defined as
\[
K_{a_n, b_n}^+ \parens{x}
=
\frac{4 e^{- b_n \abs{x}} - 4}{3 \abs{x}}
\quad \text{and} \quad
K_{a_n, b_n}^- \parens{x}
=
\frac{e^{- a_n \abs{x}} - 1}{3 \abs{x}},
\]
so that $K_{a_n, b_n} = K_{a_n, b_n}^+ - K_{a_n, b_n}^-$.

Let us prove that
\begin{equation}
\label{asy-2:eqn:limsup}
\limsup_{n \to \infty} E_{a_n, b_n} \parens{\mu} \leq E_{0, 0} \parens{\mu}.
\end{equation}
In view of \cite[Theorem 11.5]{liebAnalysis2001} (or Theorem \ref{intro:thm:nonautonomous} in the case $a = b = 0$), we can let
$v_0$ denote a ground state of \eqref{intro:eqn:normalized_Schroedinger}. It follows from the definition of $v_0$ that
\[
E_{0, 0} \parens{\mu}
=
\E_{0, 0} \parens{v_0}
=
\E_{a_n, b_n} \parens{v_0}
-
\frac{1}{4}
\K_{a_n, b_n} \parens{v_0}
\geq
E_{a_n, b_n} \parens{\mu}
-
\frac{1}{4}
\K_{a_n, b_n} \parens{v_0}.
\]
In view of the decomposition of $K_{a_n, b_n}$,
\begin{align*}
\K_{a_n, b_n} \parens{v_0}
&=
\frac{4}{3}
\int \int
	\frac{e^{- b_n \abs{x - y}} - 1}{\abs{x - y}}
	v_0 \parens{x}^2
	v_0 \parens{y}^2
\dif x \ddif y
\\
&-
\frac{1}{3}
\int \int
	\frac{e^{- a_n \abs{x - y}} - 1}{\abs{x - y}}
	v_0 \parens{x}^2
	v_0 \parens{y}^2
\dif x \ddif y.
\end{align*}
Lemma \ref{asy-2:lem:L^2_estimate} implies
$
\abs{\K_{a_n, b_n} \parens{v_0}}
\leq
\frac{\parens{4 b_n + a_n} \mu^2}{3}
\to
0
$
as $n \to \infty$, hence the result.

As
$\set{u_n}_{n \in \nat} \subset \Sphere \parens{\mu}$,
a similar argument shows that
$
\liminf_{n \to \infty} E_{a_n, b_n} \parens{\mu}
\geq
E_{0, 0} \parens{\mu}
$.
In view of \eqref{asy-2:eqn:limsup}, we deduce that
$E_{a_n, b_n} \parens{\mu} \to E_{0, 0} \parens{\mu}$
as $n \to \infty$ and $\parens{u_n}_{n \in \nat}$ is a minimizing sequence of $\E_{0, 0}|_{\Sphere \parens{\mu}}$. At this point, it suffices to argue as in the proof of Theorem \ref{intro:thm:nonautonomous} to finish.
\end{proof}

\subsection{Case 2: $\parens{a_n, b_n} \to \parens{\infty, \infty}$ as $n \to \infty$}

\begin{proof}[Proof of Theorem \ref{intro:thm:asy-2}]
Once again, we omit the dependence on $V$. Let us prove that
\begin{equation}
\label{asy-1:eqn:limsup}
\limsup_{n \to \infty} E_{a_n, b_n} \parens{\mu}
\leq
E_{\infty, \infty} \parens{\mu}.
\end{equation}
In view of Theorems \ref{intro:thm:autonomous} and \ref{intro:thm:nonautonomous}, we can let $v_\infty$ denote a ground state of \eqref{intro:eqn:normalized_Choquard}. It follows from the definition of $v_\infty$ that
\begin{align*}
E_{\infty, \infty} \parens{\mu}
=
\E_{\infty, \infty} \parens{v_\infty}
&=
\E_{a_n, b_n} \parens{v_\infty}
-
\frac{1}{12}
\parens*{
	4 \D_{b_n} \parens{v_\infty}
	-
	\D_{a_n} \parens{v_\infty}
}
\\
&\geq
E_{a_n, b_n} \parens{\mu}
-
\frac{1}{12}
\parens*{
	4 \D_{b_n} \parens{v_\infty}
	-
	\D_{a_n} \parens{v_\infty}
}.
\end{align*}
In view of the Sobolev embedding
$\Sobolev^1 \hookrightarrow \Lebesgue^4$,
the result follows from Lemma \ref{prelim:lem:L^4_estimate}.

Now, we want to show that
\begin{equation}
\label{asy-1:eqn:bounded}
\parens{u_n}_{n \in \nat}
~ \text{is bounded in} ~
\Sobolev^1.
\end{equation}
It follows from \eqref{asy-1:eqn:limsup} that
$\parens{E_{a_n, b_n} \parens{\mu}}_{n \in \nat}$
is bounded. In view of this fact, the result is a consequence of Lemma \ref{prelim:lem:uniform_estimate}.

We proceed to the proof that
\begin{equation}
\label{asy-1:eqn:liminf}
\liminf_{n \to \infty} E_{a_n, b_n} \parens{\mu}
\geq
E_{\infty, \infty} \parens{\mu}.
\end{equation}
It follows from the definition of $u_n$ that
\begin{align*}
E_{a_n, b_n} \parens{\mu}
=
\E_{a_n, b_n} \parens{u_n}
&=
\E_{\infty, \infty} \parens{u_n}
+
\frac{1}{12}
\parens*{
	4 \D_{b_n} \parens{u_n}
	-
	\D_{a_n} \parens{u_n}
}
\\
&\geq
E_{\infty, \infty} \parens{\mu}
+
\frac{1}{12}
\parens*{
	4 \D_{b_n} \parens{u_n}
	-
	\D_{a_n} \parens{u_n}
}.
\end{align*}
In view of the embedding
$\Sobolev^1 \hookrightarrow \Lebesgue^4$,
it follows from \eqref{asy-1:eqn:bounded} that
$\parens{u_n}_{n \in \nat}$ is bounded in $\Lebesgue^4$.
As such, \eqref{asy-1:eqn:liminf} follows from Lemma \ref{prelim:lem:L^4_estimate}.

In view of \eqref{asy-1:eqn:limsup} and \eqref{asy-1:eqn:liminf}, we deduce that $\parens{u_n}_{n \in \nat}$ is a minimizing sequence of $\E_{\infty, \infty}|_{\Sphere \parens{\mu}}$. At this point, there are two possible cases.

\subparagraph{Case 1: $V \equiv 0$.}
In this case, it suffices to argue as in the proof of Theorem \ref{intro:thm:autonomous}.

\subparagraph{Case 2: $V \not \equiv 0$.}
In this case, it suffices to argue as in the proof of Theorem \ref{intro:thm:nonautonomous}.
\end{proof}

\subsection{Case 3: $\parens{a_n, b_n} \to \parens{0, \infty}$ as $n \to \infty$}

\begin{proof}[Proof of Theorem \ref{intro:thm:asy-3}]
We omit the dependence on $V$. In this case, it is obvious that
\begin{equation}
\label{asy-3:eqn:liminf}
\liminf_{n \to \infty} E_{a_n, b_n} \parens{\mu}
\geq
E_{0, \infty} \parens{\mu}.
\end{equation}
Let us prove that
\begin{equation}
\label{asy-3:eqn:limsup}
\limsup_{n \to \infty} E_{a_n, b_n} \parens{\mu}
\leq
E_{0, \infty} \parens{\mu}.
\end{equation}
In view of Theorems \ref{intro:thm:autonomous} and \ref{intro:thm:nonautonomous}, we can let $v_\infty$ denote a ground state of \eqref{intro:eqn:normalized_Choquard-type}. It follows from the definition of $v_\infty$ that
\begin{align*}
E_{0, \infty} \parens{\mu}
=
\E_{0, \infty} \parens{v_\infty}
&=
\E_{a_n, b_n} \parens{v_\infty}
-
\frac{1}{12}
\parens*{
	4 \D_{b_n} \parens{v_\infty}
	-
	\D_{a_n} \parens{v_\infty}
	+
	\D_0 \parens{v_\infty}
}
\\
&\geq
E_{a_n, b_n} \parens{\mu}
-
\frac{1}{12}
\parens*{
	4 \D_{b_n} \parens{v_\infty}
	-
	\D_{a_n} \parens{v_\infty}
	+
	\D_0 \parens{v_\infty}
}.
\end{align*}
On one hand, it suffices to argue as in the proof of Theorem \ref{intro:thm:asy-1} to deduce that
$\D_0 \parens{v_\infty} - \D_{a_n} \parens{v_\infty} \to 0$
as $n \to \infty$. On the other hand, it suffices to argue as in the proof of Theorem \ref{intro:thm:asy-2} to deduce that
$\D_{b_n} \parens{u_n} \to 0$ as $n \to \infty$. We conclude that \eqref{asy-3:eqn:limsup} holds. In view of \eqref{asy-3:eqn:liminf} and \eqref{asy-3:eqn:limsup},  $\parens{u_n}_{n \in \nat}$ is a minimizing sequence of $\E_{0, \infty}|_{\Sphere \parens{\mu}}$.

\subparagraph{Case 1: $V \equiv 0$.}
In this case, it suffices to argue as in the proof of Theorem \ref{intro:thm:autonomous}.

\subparagraph{Case 2: $V \not \equiv 0$.}
In this case, it suffices to argue as in the proof of Theorem \ref{intro:thm:nonautonomous}.
\end{proof}

\appendix
\section{Physical motivation}
\label{physics}
\subsection{Electromagnetic self-force}
\label{physics:electromagnetic}

It is classical that an accelerating electrically charged body loses energy due to electromagnetic radiation (see \cite{perlickSelfforceElectrodynamicsImplications2015, poissonMotionPointParticles2011}, for instance). This phenomenon is usually modeled by means of an \emph{electromagnetic self-force} acting over the aforementioned body.

A naive application of the Maxwell theory predicts that an accelerating charged point particle suffers an infinite self-force. Indeed, a charged point particle at $0 \in \real^3$ in vacuum has an associated electric potential
$\phi \parens{x} = \frac{q}{4 \pi \eps_0 \abs{x}}$,
where $q \in \real \setminus \set{0}$ denotes the charge of the particle and $\eps_0$ denotes the \emph{vacuum permitivity}. In particular, the energy of the ensuing electric field is infinite,
$
\frac{\eps_0}{2}
\int \abs{\nabla \phi \parens{x}}^2 \dif x = \infty
$,
and the divergence of this integral is intimately related with the divergence of the electromagnetic self-force (see \cite[Section 4.1]{perlickSelfforceElectrodynamicsImplications2015}).

The standard procedure to deal with these divergences involves renormalization arguments. In particular, the resulting equation of motion becomes a third order differential equation, called the \emph{Abraham--Lorentz equation}. Nonetheless, there is no consensus that this approach is always the preferred one due to some inconvenient unphysical consequences of third order equations (see \cite[Section 1]{perlickSelfforceElectrodynamicsImplications2015}).

To avoid these problems, another possible approach is to consider alternative electromagnetic theories under which charged point particles do not induce electric fields with infinite energy. One of the simplest examples of this kind of theories is the Bopp--Landé--Thomas--Podolsky (BLTP) theory (see \cite[Section 2.3]{perlickSelfforceElectrodynamicsImplications2015}).

\subsection{Gravitational self-force and a family of fourth-order gravity theories}
In classical physics, the gravitational and electric potentials are described by very similar equations. Indeed, given an electric charge density
$\rho_{\mathrm{Charge}} \colon \real^3 \to \real$
and a mass density
$\rho_{\mathrm{Mass}} \colon \real^3 \to \coi{0, \infty}$,
the respectively generated potentials are described by the laws that follow:
\begin{equation}
\label{intro:eqn:Newton--Maxwell}
- \Delta \phi_{\mathrm{Maxwell}}
=
\eps_0^{- 1} \rho_{\mathrm{Charge}}
\quad \text{and} \quad
\Delta \phi_{\mathrm{Newton}}
=
4 \pi G \rho_{\mathrm{Mass}},
\end{equation}
where $G$ denotes the \emph{Newtonian gravitational constant}. 

Similar to the aforementioned electromagnetic radiation, it follows from general relativity that an accelerating massive point particle also loses energy due to gravitational radiation and thus suffers a reaction by means of a gravitational self-force. Due to the analogy \eqref{intro:eqn:Newton--Maxwell}, the energy of the gravitational field generated by a massive point particle is also infinite and we obtain a divergent self-force once again. In view of this problem, Perlick suggested the consideration of modified gravity theories inspired by BLTP electrodynamics in order to obtain a model where we can avoid these divergences (see \cite[Section 5]{perlickSelfforceElectrodynamicsImplications2015}).

Motivated by this suggestion, we consider a family of modified gravity theories described by Lagrangian densities of the form
\begin{equation}
\label{intro:eqn:Lagragian_density}
L_g
=
\frac{1}{8 \pi G}
\parens*{
	\frac{R}{2}
	+
	\alpha R_{\mu \nu} R^{\mu \nu}
	+
	\beta R^2
},
\end{equation}
where $g$ denotes a Lorentzian metric; $\alpha, \beta$ denote real-valued dimensionless constants; $R_{\mu \nu}$ denotes the local expression of the entries of the Ricci tensor induced by $g$ and $R$ denotes the scalar curvature induced by $g$. In fact, this family of theories was studied in detail in Stelle's seminal paper \cite{stelleClassicalGravityHigher1978}.

Let us recall a few facts about this theory as in \cite[Section 3.5]{schmidtFourthOrderGravity2007}. Physical considerations indicate that we should suppose that $\alpha \geq 0$ and $\alpha + 3 \beta \leq 0$. A comparison with the Proca equation suggests respectively defining the expected masses of the spin-0 and spin-2 graviton as
$a = \frac{1}{\sqrt{- 4 \parens{\alpha + 3 \beta}}}$
and
$b = \frac{1}{\sqrt{2 \alpha}}$.
Finally, in the Newtonian limit of this theory, the gravitational potential generated by a point particle with mass $m$ at the origin becomes given by $\phi \parens{x} = m K_{a, b} \parens{x}$.

\subsection{Coupling the Schrödinger equation with fourth-order gravity}

The following \emph{Schrödinger--Newton system} was originally proposed by Diósi in \cite{diosiGravitationQuantummechanicalLocalization1984} as a model for the wave function of a quantum particle with mass $m$ in a theory where only matter fields are quantized and the gravitational field remains classical at the fundamental level:
\begin{equation}
\label{intro:eqn:SN}
\begin{cases}
-
\frac{\hbar^2}{2 m} \Delta \psi
+
m \phi \psi
=
\iu \hbar \frac{\partial \psi}{\partial t};
\\
\Delta \phi
=
4 \pi G m \abs{\psi}^2
\end{cases}
\quad \text{in} \quad \real^3 \times \real,
\end{equation}
where $\hbar$ denotes the \emph{Planck constant}. It is easy to verify that $|\cdot|^{- 1}$ provides a distributional solution to the problem $- \Delta v = 4 \pi \delta_0$, so the standard procedure to obtain solutions to \eqref{intro:eqn:SN} involves considering the following nonlocal semilinear problem:
\begin{equation}
\label{intro:eqn:reduced_SN}
-
\frac{\hbar^2}{2 m} \Delta \psi
-
G m^2 \parens*{|\cdot|^{- 1} \ast \abs{\psi}^2} \psi
=
\iu \hbar \frac{\partial \psi}{\partial t}.\footnotemark
\end{equation}\footnotetext{For a rigorous justification of this reduction procedure, we refer the reader to \cite[Section 4]{benciEigenvalueProblemSchrodingerMaxwell1998} or
\cite[Section 3]{benciSolitaryWavesNonlinear2002}.}

Motivated by the previous sections, this paper considers the analogous problem obtained by considering the gravitational potential of a point particle obtained in the Newtonian limit of the gravity theory with Lagrangian density \eqref{intro:eqn:Lagragian_density}, that is,
\begin{equation}
\label{physics:eqn:rmSN}
-
\frac{\hbar^2}{2 m} \Delta \psi
+
G m^2 \parens*{K_{a, b} \ast \abs{\psi}^2} \psi
=
\iu \hbar \frac{\partial \psi}{\partial t}.
\end{equation}
Suppose that $\psi$ is of the form
$
\psi \parens{x, t}
=
v \parens{x} \exp \parens{- \frac{\iu \widetilde{\omega} t}{\hbar}}
$
for certain $v \colon \real^3 \to \real$ and
$\widetilde{\omega} \in \real$. Then $\psi$ solves \eqref{physics:eqn:rmSN} if, and only if,
\[
-
\frac{\hbar^2}{2 m} \Delta v
-
\widetilde{\omega} v
+
G m^2 \parens{K_{a, b} \ast v^2} v
=
0.
\]
By setting $\omega = \frac{2 m \widetilde{\omega}}{\hbar^2}$
and $u = \frac{\sqrt{2 G m^3}}{\hbar} v$, we obtain
\[
- \Delta u - \omega u + \parens{K_{a, b} \ast u^2} u
=
0.
\]
This equation inspired the study of the normalized problem \eqref{intro:eqn:nonautonomous}.

\section{A study of the function $K_{a, b}$}
\label{study}

Let $k_{a, b} \colon \ooi{0, \infty} \to \real$ be given by
\[
k_{a, b} \parens{r}
=
\frac{1}{r} \left(
	\frac{4}{3} e^{- b r}
	-
	\frac{1}{3} e^{- a r}
	-
	1
\right),
\]
so that $K_{a, b} \parens{x} = k_{a, b} \parens{\abs{x}}$.

\paragraph{Singular case $\set{a, b} \cap \set{0, \infty} \neq \emptyset$.}
The next table contains a list of the explicit expressions of $k_{a, b}$ in the singular case
$\set{a, b} \cap \set{0, \infty} \neq \emptyset$.

\begin{table}[h]
\centering
\begin{tabular}{c  c  c}
Case &$k_{a, b}(r)$ &Graph
\\
\hline
$a = b = 0$
& $\equiv 0$
& -
\\ \\
$a = 0$, $0 < b < \infty$
& $- \frac{4}{3 r} \parens{1 - e^{- b r}}$
& Fig. \ref{graph2}
\\ \\
$0 = a < b = \infty$
& $- \frac{4}{3 r}$
& Fig. \ref{graph1}
\\ \\
$0 < a < \infty$, $b = \infty$
& $- \frac{1}{r} \parens{1 + \frac{1}{3} e^{- a r}}$
& Fig. \ref{graph1}
\\ \\
$a = b = \infty$
& $- \frac{1}{r}$
& Fig. \ref{graph1}
\\ \\
$a = \infty$, $0 < b < \infty$
& $\frac{1}{r} \parens{\frac{4}{3} e^{- b r} - 1}$
& Fig. \ref{graph7}
\\ \\
$0 = b < a = \infty$
& $\frac{1}{3 r}$
& Fig. \ref{graph9}
\\ \\
$0 = b < a < \infty$
& $\frac{1}{3 r} \parens{1 - e^{- a r}}$
& Fig. \ref{graph8}
\end{tabular}
\caption{Singular cases of $a, b$.}
\end{table}

\paragraph{Case $0 < a \leq 2 b < \infty$.}
Let us prove analytically that the possible geometries of $k_{a, b}$ are illustrated in Figures \ref{graph2}, \ref{graph3}. First, we show that $k_{a, b}$ is nondecreasing. It is clear that
$k_{a, b}' \parens{r} = \frac{f \parens{r}}{3 r^2}$, where
\[
f \parens{r}
:=
r \parens*{a e^{- a r} - 4 b e^{- b r}}
+
3 + e^{- a r} - 4 e^{- b r}.
\]
As such, we only have to prove that $f$ is nonnegative. It follows from calculus that
\[
f' \parens{r}
=
r \parens*{- a^2 e^{- a r} + 4 b^2 e^{- b r}}.
\]
We deduce that $r_0 > 0$ is a critical point of $f$ if, and only if, $e^{\parens{b - a} r_0} = \frac{4 b^2}{a^2}$.
There are two possible cases.
\begin{enumerate}
\item
$a < b$. In this case, $f$ admits a unique critical point on
$\ooi{0, \infty}$,
$r_0 = \frac{1}{b - a} \log \parens*{\frac{4 b^2}{a^2}}$.
As $f'' \parens{0^+} > 0$ and
$\lim_{r \to \infty} f \parens{r} = 3$, we deduce that
$f \parens{r_0} > 0$. In particular, it follows that $f$ is positive.
\item
$b \leq a \leq 2 b$. As $\frac{4 b^2}{a^2} \geq 1$ and $b - a \leq 0$, we deduce that $f$ has no critical points. As $f'' \parens{0^+} \geq 0$, it follows that $f$ only takes positive values.
\end{enumerate}

\paragraph{Case $0 < 2 b < a < \infty$.}
It suffices to argue as before to verify that
$r_1 = \frac{1}{a - b} \log \parens*{\frac{a^2}{4 b^2}} > 0$
is the unique critical point of $k_{a, b}$ on $\ooi{0, \infty}$. A Taylor expansion shows that
\[
k_{a, b} \parens{r}
=
\frac{a - 4 b}{3}
+
\frac{4 b^2 - a^2}{6} r
+
O \parens{r^2}
\]
in a neighborhood of $r = 0$. As such, it is clear that
$k_{a, b} ' \parens{0^+} < 0$ and it suffices to consider the different cases $a < 4 b$,
$a = 4 b$, $4 b < a$ to obtain the geometries illustrated in Figures \ref{graph4}--\ref{graph6}.

\begin{figure}[p]
\centering
\begin{minipage}{0.48\textwidth}
\centering
\includegraphics{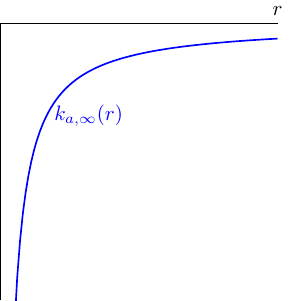}
\caption{$0 \leq a \leq 2 b = \infty$}
\label{graph1}
\end{minipage}
\hfill
\begin{minipage}{0.48\textwidth}
\centering
\includegraphics{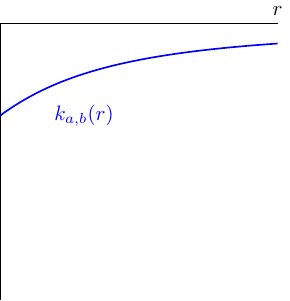}
\caption{$0 \leq a < 2 b < \infty$}
\label{graph2}
\end{minipage}

\vspace{1em}

\begin{minipage}{0.48\textwidth}
\centering
\includegraphics{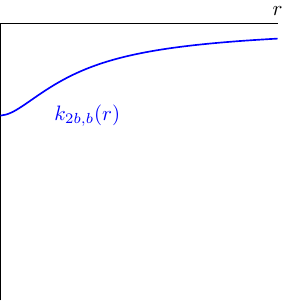}
\caption{$0 < a = 2 b < \infty$}
\label{graph3}
\end{minipage}
\hfill
\begin{minipage}{0.48\textwidth}
\centering
\includegraphics{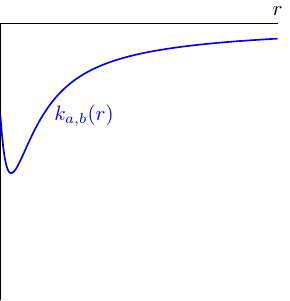}
\caption{$0 < 2 b < a < 4 b < \infty$}
\label{graph4}
\end{minipage}

\vspace{1em}

\begin{minipage}{0.48\textwidth}
\centering
\includegraphics{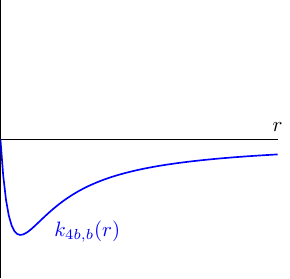}
\caption{$0 < a = 4 b < \infty$}
\label{graph5}
\end{minipage}
\hfill
\begin{minipage}{0.48\textwidth}
\centering
\includegraphics{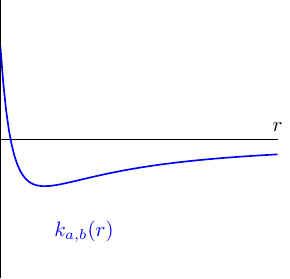}
\caption{$0 < 4 b < a < \infty$}
\label{graph6}
\end{minipage}
\end{figure}

\begin{figure}[t]
\centering
\begin{minipage}{0.48\textwidth}
\centering
\includegraphics{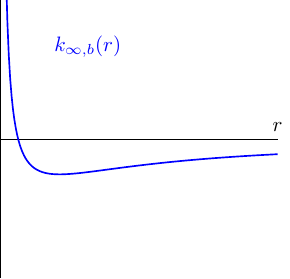}
\caption{$0 < 4 b < a = \infty$}
\label{graph7}
\end{minipage}
\hfill
\begin{minipage}{0.48\textwidth}
\centering
\includegraphics{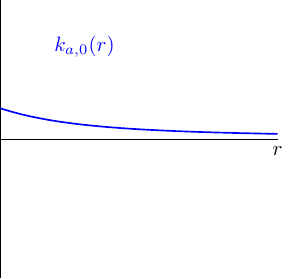}
\caption{$0 = 4 b < a < \infty$}
\label{graph8}
\end{minipage}

\vspace{1 em}

\begin{minipage}{0.48\textwidth}
\centering
\includegraphics{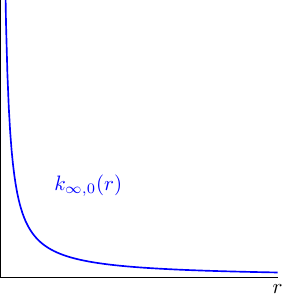}
\caption{$0 = 4 b < a = \infty$}\label{graph9}
\end{minipage}
\end{figure}

\clearpage

\sloppy
\printbibliography

\end{document}